\documentclass[10pt,a4paper]{amsart}
\usepackage{etex}
\usepackage{amssymb}
\usepackage{amsthm}
\usepackage[autostyle]{csquotes}
\usepackage{amsfonts}
\usepackage{microtype}
\usepackage{mathrsfs}
\usepackage{graphicx}
\usepackage{enumerate}
\usepackage{color}
\usepackage{latexsym}
\usepackage{rotating}
\usepackage{xspace}
\usepackage[all]{xy}
\usepackage{longtable}
\usepackage{leftidx}
\usepackage{mathtools}
\usepackage{titlesec}
\usepackage{tikz}
\usetikzlibrary{arrows}
\usetikzlibrary{matrix,arrows,decorations.pathmorphing}
\usepackage{geometry}

\newtheorem{theorem}{Theorem}[section]
\numberwithin{equation}{theorem}
\newtheorem{lemma}[theorem]{Lemma}

\newtheorem{corollary}[theorem]{Corollary}

\theoremstyle{definition}
\newtheorem{definition}[theorem]{Definition}
\newtheorem{example}[theorem]{Example}

\theoremstyle{conjecture}

\newcommand{\Ass}{\operatorname{Ass}}

\newcommand{\Spec}{\operatorname{Spec}}

\newcommand{\ara}{\operatorname{ara}}

\newcommand{\id}{\operatorname{id}}
\newcommand{\fd}{\operatorname{fd}}

\newcommand{\pd}{\operatorname{pd}}

\newcommand{\Ext}{\operatorname{Ext}}
\newcommand{\Supp}{\operatorname{Supp}}

\newcommand{\Tor}{\operatorname{Tor}}
\newcommand{\Hom}{\operatorname{Hom}}
\newcommand{\Att}{\operatorname{Att}}
\newcommand{\Ann}{\operatorname{Ann}}
\newcommand{\Rad}{\operatorname{Rad}}

\newcommand{\depth}{\operatorname{depth}}

\newcommand{\Coass}{\operatorname{Coass}}
\newcommand{\coker}{\operatorname{coker}}

\newcommand{\Max}{\operatorname{Max}}

\newcommand{\lo}{\longrightarrow}
\newcommand{\fm}{\frak{m}}
\newcommand{\fp}{\frak{p}}
\newcommand{\fq}{\frak{q}}
\newcommand{\fr}{\frak{r}}
\newcommand{\fa}{\frak{a}}
\newcommand{\fb}{\frak{b}}

\newcommand{\fn}{\frak{n}}

\newcommand{\E}{\mbox{E}}
\newcommand{\D}{\mbox{D}}

\newcommand{\F}{\mbox{F}}

\renewcommand{\H}{\mbox{H}}

\newcommand{\suchthat}{\;\ifnum\currentgrouptype=16 \middle\fi|\;}

\makeatletter
\newcommand{\holim@}[2]{%
\vtop{\m@th\ialign{##\cr
\hfil$#1\operator@font holim$\hfil\cr
\noalign{\nointerlineskip\kern1.5\ex@}#2\cr
\noalign{\nointerlineskip\kern-\ex@}\cr}}%
}
\newcommand{\holim}{%
\mathop{\mathpalette\holim@{\rightarrowfill@\textstyle}}\nmlimits@
}
\makeatother

\makeatletter
\def\@secnumfont{\bfseries}
\def\section{\@startsection{section}{1}%
\z@{.7\linespacing\@plus\linespacing}{.5\linespacing}%
{\normalfont\Large\bfseries\filcenter}}
\def\subsection{\@startsection{subsection}{2}%
\z@{.5\linespacing\@plus.7\linespacing}{-.5em}%
{\normalfont\large\bfseries}}
\makeatother
\begin{document}
	
\author[K. Divaani-Aazar and M. Eghbalii]
{Kamran Divaani-Aazar and Mohammad Eghbali}
	
\title[Regularity, local cohomology and ...]
{Regularity, local cohomology and injective modules under contracting endomorphisms}

\address{K. Divaani-Aazar, Department of Mathematics, Faculty of Mathematical Sciences, Alzahra University,
Tehran, Iran.}
\email{kdivaani@ipm.ir}

\address{M. Eghbali, Department of Mathematics, Faculty of Mathematical Sciences, Alzahra University, Tehran,
Iran.}
\email{m.eghbali007@yahoo.com}

\subjclass[2020]{13A35; 13H05; 13D45.}
	
\keywords {Contracting endomorphism; Frobenius endomorphism; local cohomology; locally contracting endomorphism;
regular ring.}
	
\begin{abstract}
This work extends several fundamental results previously established for rings of characteristic
$p$ to the broader class of rings that admit a contracting endomorphism. Specifically, let $\phi:
R \to R$ be such an endomorphism, and denote by $^{\phi}R$ the ring $R$ endowed with the left
$R$-module structure induced by $\phi$. We employ the functor $\Hom_R(^{\phi}R,-)$ to derive new
characterizations of regular rings. Furthermore, we examine the behavior of the functor $(-)
\otimes_R {^{\phi}}R$ on local cohomology modules and injective modules.
\end{abstract}

\maketitle

\tableofcontents
	
\section{Introduction}

The study of modules over rings of prime characteristic has long been a topic of interest among algebraists.
Let $R$ be a ring of prime characteristic $p$. The ring endomorphism $f:R\rightarrow R$ given by $f(x)=x^p$
is called the {\it Frobenius endomorphism}. The ring $R$ naturally acquires the structure of a left $R$-module
via $f$, which is denoted by $^f\hspace{-0.5mm}R$. The endofunctor $\F(-)=(-)\otimes_R {^{f}\hspace{-0.5mm}R}$
on the category of $R$-modules is also known as the Frobenius functor.

Kunz \cite{Kun69} was a pioneer in this field. His well-known result states that a local ring
$(R,\mathfrak{m},\Bbbk)$ of prime characteristic is regular if and only if the Frobenius functor is exact.
This fundamental theorem has been the basis for many generalizations. Furthermore, the Frobenius functor plays
a significant role in studying local cohomology modules.

When $(R,\mathfrak{m},\Bbbk)$ is a local ring, a ring endomorphism $\phi: R\to R$ is called {\it contracting}
if there exists a positive integer $i$ such that $\phi^i(\mathfrak{m})\subseteq \mathfrak{m}^2$. Also, a ring
endomorphism $\phi :R \to  R$ is called \emph{locally contracting} if the map $\phi_{\fp} :R_{\fp}\to R_{\fp}$
defined by $\phi_{\fp}(r/s)=\phi(r)/\phi(s)$ is a contracting endomorphism for every prime ideal $\fp$ of
$R$. It is routine to verify that the Frobenius endomorphism is locally contracting. Recently, the study of
contracting endomorphisms has attracted considerable attention. Avramov, Iyengar, and Miller \cite{AIM06}
investigated the structure of modules over contracting endomorphisms. Their work demonstrated that some favorable
properties of the Frobenius endomorphism could extend to general contracting endomorphisms. See also \cite{EY12,
FM21, Rah09}. In this work, we continue this line of inquiry and derive several useful and well-known properties
of the Frobenius endomorphism for general contracting endomorphisms.

Let $\phi:R\to R$ be a ring endomorphism. We equip $R$ with a left $R$-module structure via $\phi$, denoted by
${}^{\phi}\!R$. If the $R$-module ${}^{\phi}\!R$ is finitely generated, then $R$ is said to be $\F^{\phi}\!$-finite.
We consider two endofunctors: $\F^{\phi}_R(-)=(-)\otimes_R {}^{\phi}\!R$ and $\widetilde{\F}^{\phi}_R(-)=
\Hom_R({}^{\phi}\!R,-)$ on the category of $R$-modules. Our study focuses on the properties of these functors when
$\phi$ is contracting or locally contracting.

In Section 2, we gather some necessary preliminaries regarding contracting and locally contracting endomorphisms.

In Section 3, we present two characterizations of regular rings using the functor $\widetilde{\F}^{\phi}(-)$, which
are detailed in Theorems \ref{4.1a} and \ref{4.4a}. Theorem \ref{4.1a} shows that for a local ring $(R,\fm,\Bbbk)$
admitting a contracting endomorphism  $\phi:R\to R$, the conditions of $R$ being regular and $R/{\phi(\fm)R}$ being
Artinian are equivalent to the exactness of the endofunctor $\widetilde{\F}^{\phi}(-)$ on the category of Artinian
$R$-modules. Theorem \ref{4.1a} has an immediate consequence for Frobenius endofunctor, which can be viewed as a
dual to Kunz’s theorem;  see Corollary \ref{4.2a}. Theorem \ref{4.4a} states that a local ring $(R,\fm,\Bbbk)$
admitting a contracting endomorphism $\phi:R\to R$ such that $R$ is $\F^{\phi}\!$-finite is regular if and only if
the functors $\widetilde{\F}^{\phi}_R(-)$ and $\F^{\phi}_R(-)$ are naturally isomorphic on the category of finitely
generated $R$-modules.

In Section 4, we explore the interplay between the functor $\F^{\phi}_R(-)$ and local cohomology modules. Let
$R$ be a regular ring and $\phi:R \to R$ a locally contracting endomorphism such that $R$ is $\F^{\phi}\!$-finite.
We show that if for some ideal $\fa$ of $R$ and a nonnegative integer $\ell$, the $R$-module $\H^{\ell}_{\fa}(R)$
is Artinian, then it is injective; see Theorem \ref{5.4a}.

Finally, Section 5 offers an improvement over some of Marley's results in \cite{MA14}. This section investigates
the preservation of injective modules under the functor $\F^{\phi}(-)$. As the main result of this section, we prove
that if $R$ is a quasi-Gorenstein ring and $\phi:R \to R$ is a locally contracting endomorphism, then $\F^{\phi}(I)
\cong I$ for every injective $R$-module $I$; see Theorem \ref{3.6a}. We conclude this section by considering the
behavior of associated and coassociated prime ideals under the action of the functors $\F^{\phi}(-)$ and
$\widetilde{\F}^{\phi}(-)$, as detailed in Corollaries \ref{3.7a} and \ref{3.8a}.

\section{Preliminaries}

Throughout this paper, $R$ denotes a commutative Noetherian ring with identity, and $\phi :R\to  R$ a ring endomorphism.
For a positive integer $e$, let $\phi^e = \phi \circ\dots\circ \phi:R \to R$ denotes the $e$-th iteration of $\phi$. In
particular, $\phi^1=\phi$. Given an $R$-module $M$, we write $^{{\phi}^e}\hspace{-0.5mm} M$ for the Abelian group $M$
endowed with a left $R$-module structure via ${\phi}^e$,  where the action is given by $r.m= {\phi}^e(r)m$ for $r\in R$
and $m\in M$.

For each positive integer $e$, let $\F^{\phi^e}_R(-)$ denote the endofunctor $-\otimes_R {^{\phi^e}}R$ on the category
of $R$-modules. The right $R$-module structure on $\F^{\phi^e}_R(M)$ is given by $(x\otimes s).r=x\otimes (sr)$ for
$x\in M$, $s \in {^{{\phi}^e}}\hspace{-0.5mm}R$ and $r\in R$. Note that $(rx) \otimes s=x \otimes (\phi^e(r)s)$. When the
ring $R$ is clear from the context, we may write $\F^{\phi^e}(-)$ instead of $\F^{\phi^e}_R(-)$. The functor $\F^{\phi^e}$
is an additive, right exact functor that commutes with direct sums and direct limits.

Dual to the functor $\F^{\phi^e}_R(-)$, we consider the endofunctor $\widetilde{\F}^{\phi^e}_R(-)=\Hom_R({^{\phi^e}}R,-)$
on the category of $R$-modules. For an $R$-module $M$, $\widetilde{\F}^{\phi^e}_R(M)$ is an Abelian group with the right
$R$-module structure given by $(\theta .r)(s)=\theta (sr)$ for $r\in R$, $\theta\in \Hom_R ({^{\phi^e}\hspace{-0.5mm}R},M)$
and $s\in {^{{\phi}^e}}\hspace{-0.5mm}R$. Since $\theta$ is a homomorphism of left $R$-modules, we have $r\theta(s)=\theta
(\phi^e(r) s)$. The functor $\widetilde{\F}^{\phi^e}_R(-)$ is an additive, left exact functor that commutes with direct
products and inverse limits. When the ring $R$ is clear from the context, we may write $\widetilde{\F}^{\phi^e}(-)$ instead
of $\widetilde{\F}^{\phi^e}_R(-)$.

Note that if $R$ has positive prime characteristic $p$, we can consider $\phi$  to be the Frobenius endomorphism
$f(x)=x^p$ and $\F^{\phi}$ to be the Frobenius functor $\F(-)=(-)\otimes_R {^{f}\hspace{-0.5mm}R}$.

We begin with the following definitions from \cite[12.1]{AIM06} and \cite[Definition 3.3]{EY12}.

\begin{definition}\label{2.1}
\begin{itemize}
\item[(i)] Let $(R,\fm,\Bbbk)$ be a local ring.  A ring endomorphism $\phi :R \to  R$ is called {\it contracting}
if there exists a positive integer $i$ such that $\phi ^i(\fm)\subseteq {\fm}^2$.
\item[(ii)] A ring endomorphism $\phi:R \to  R$ is called {\it locally contracting} if for every prime ideal $\fp$
of $R$, the induced map $\phi_{\fp} :R_{\fp}\to R_{\fp}$ defined by $\phi_{\fp}(r/s)=\phi(r)/\phi(s)$ is a
contracting endomorphism.
\end{itemize}
\end{definition}	
	
\begin{definition}\label{2.2} Let $\phi :R\to  R$ be a ring endomorphism. The ring $R$ is said to be
$\F^{\phi}$-{\it finite} if the $R$-module $^{{\phi}}\hspace{-0.5mm} R$ is finitely generated.
\end{definition}

We summarize several useful properties of contracting and locally contracting endomorphisms in the following result.
In what follows, for an ideal $\fa$ of $R$, the radical of $\fa$ is denoted by $\Rad(\fa)$.
	
\begin{lemma}\label{2.3} Let $\phi:R \to R$ be a ring endomorphism, $M$ an $R$-module and $e$ a positive integer.
The following properties hold:
\begin{itemize}
\item[(i)]  $\F^{\phi^e}(R) \cong R$, as right $R$-modules.
\item[(ii)]  If $\alpha: M \to M$ is multiplication by $x\in R$, then $\F^{\phi^e}(\alpha)$ is multiplication by
$\phi^e(x)$.
\item[(iii)]  Let $\beta:R^n \to R^m$ be an $R$-homomorphism represented by a matrix $(a_{ij})$. Then $\F^{\phi^e}(\beta)$
is represented by the matrix $(\phi^e (a_{ij}))$.
\item[(iv)]  For any ideal $\fa=\langle a_1, \dots, a_t\rangle$ of $R$, we have $\F^{\phi^e} (R/\fa) \cong
R/{\phi^e(\fa)R}$, where $\phi^e(\fa)R=\langle\phi^e(a_1), \dots, \phi^e(a_t)\rangle$.
\item[(v)]  Let $T$ be an $R$-algebra with an algebra endomorphism $\psi: T \to T$. Then, there is an isomorphism of
$T$-modules: $\F^{\phi^e}_R(M)\otimes_R T \cong \F^{\psi^e}_T (M\otimes_R T)$.
\item[(vi)] Let $\phi$ be locally contracting. Then for any prime ideal $\fp$ of $R$, we have $\Rad(\phi(\fp)R)=\fp$.
In particular, for every maximal ideal $\fm$, the ideal $\phi^{e}(\fm)R$ is
$\fm$-primary.
\item[(vii)] If $R$ is local and $\phi$ is locally contracting, then it follows that $\phi$ is contracting. Furthermore,
if $R$ is local and $\phi$ is contracting, then the induced ring endomorphism $\widehat{\phi}:\widehat{R}\to \widehat{R}$
is likewise contracting.
\item[(viii)] Assume that $R$ is local with the maximal ideal $\fm$ and $\phi$ is locally contracting. If $R$ is
$\F^{\phi}\!$-finite, then there is an isomorphism of $\widehat{R}$-modules $\widehat{R}\otimes_R {}^{\phi}\!R\cong
{}^{\widehat{\phi}}\!\widehat{R}$, and so $\widehat{R}$ is $\F^{\widehat{\phi}}$-finite.
\item[(ix)] If $R$ is local with the maximal ideal $\fm$ and $\phi$ is contracting, then $\phi(\fm) \subseteq \fm$.
\item[(x)] If $R$ is $\F^{\phi}\!$-finite, then for every maximal ideal $\fm$ of $R$, the quotient ring $R/{\phi(\fm)R}$
is Artinian.
\item[(xi)] If $\phi:R \to R$ is contracting (respectively, locally contracting), then the ring endomorphism $\phi^{e}:R
\to R$ is also contracting (respectively, locally contracting). Furthermore, if $R$ is $\F^{\phi}\!$-finite, then it is
also $\F^{\phi^e}\!$-finite.
\end{itemize}
\end{lemma}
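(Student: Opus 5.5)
We will verify the eleven items one at a time; most are formal consequences of the definition of $^{\phi^e}\!R$, and only (vi), (vii) and (viii) involve real content.

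\textbf{Items (i)--(v).} For (i), use the canonical map $R\otimes_R {}^{\phi^e}\!R\to R$, $x\otimes s\mapsto \phi^e(x)s$. It is right $R$-linear and has inverse $s\mapsto 1\otimes s$. For (ii), compute on generators: $(xm)\otimes s=m\otimes\phi^e(x)s$. So, after identifying via the right structure, $\F^{\phi^e}(\alpha)$ is multiplication by $\phi^e(x)$. Item (iii) follows from (i) and (ii) together with additivity, applied entrywise to the matrix. For (iv), apply the right exact functor $\F^{\phi^e}$ to the presentation $R^t\xrightarrow{(a_1,\dots,a_t)}R\to R/\fa\to 0$. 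By (iii) the image of the first map is $\langle\phi^e(a_i)\rangle$. For (v), use associativity of tensor products:
\[
M\otimes_R{}^{\phi^e}\!R\otimes_R T\cong M\otimes_R T\otimes_T {}^{\psi^e}T,
\]
where we read "algebra endomorphism" as $\psi$ being compatible with $\phi$ through the structure map.

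\textbf{Items (vi), (ix), (x), (xi).} For (vi), localize at $\fp$. Since $\phi_\fp$ is a local endomorphism with $\phi_\fp^i(\fp R_\fp)\subseteq \fp^2R_\fp$, it lies inside $\fp R_\fp$, and we get $\phi(\fp)R_\fp\subseteq \fp R_\fp$. To get the reverse inclusion up to radical, argue as follows. The induced map $R_\fp/\phi(\fp)R_\fp \leftarrow$ on residue fields shows that any prime $\fq\supseteq \phi(\fp)R$ contained in $\fp$ equals $\fp$; otherwise localizing at $\fq$ and using that $\phi_\fq$ is local forces $\phi^{-1}(\fq R_\fq)=\fq$, which contradicts $\phi(\fp)\subseteq\fq\subsetneq \fp$. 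The careful bookkeeping here, namely that $\phi_\fp$ well-defined requires $\phi(R\setminus\fp)\subseteq R\setminus\fp$ and hence $\phi^{-1}(\fp)=\fp$, is the main obstacle of the whole lemma. Once it is settled, $\Rad(\phi(\fp)R)=\fp$, and iterating with $\phi^e$ gives the claim that $\phi^e(\fm)R$ is $\fm$-primary.

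Item (ix) holds because $\phi^i(\fm)\subseteq\fm^2$ forces $\phi^{-1}(\fm)$ to be a prime containing $\fm$ up to radical, so $\phi$ is local. Item (x) holds because $R/\phi(\fm)R=\F^\phi(R/\fm)$ by (iv), which is a finitely generated module over $R/\fm$ via $\phi$, hence of finite length. Item (xi) holds because $(\phi^e)^i=\phi^{ei}$ and $\phi^{ei}(\fm)\subseteq\phi^{(e-1)i}(\fm^2)\subseteq\fm^2$ by (ix). Finite generation is preserved under composition: $^{\phi^e}\!R$ is generated by products of generators.

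\textbf{Items (vii), (viii).} For (vii), take $\fp=\fm$, since $R_\fm=R$. For completion, $\widehat\phi^i(\widehat\fm)=\phi^i(\fm)\widehat R\subseteq\fm^2\widehat R$ by continuity, using (ix). For (viii), $\widehat R\otimes_R{}^\phi\!R$ is the $\fm$-adic completion of the finite module ${}^\phi\!R$. By (vi) the $\fm$-adic topology on ${}^\phi\!R$ equals the $\phi(\fm)R$-adic topology, which equals the $\fm$-adic topology on $R$. Hence this completion is ${}^{\widehat\phi}\widehat R$.
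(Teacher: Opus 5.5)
\textbf{The gap is in (vi).} Your argument shows only that no prime $\fq$ satisfies $\phi(\fp)R\subseteq \fq\subsetneq \fp$, i.e.\ that $\fp$ is \emph{a} minimal prime of $\phi(\fp)R$. Together with $\phi(\fp)\subseteq\fp$, this does not give $\Rad(\phi(\fp)R)=\fp$. For that you need $\fp$ to be the \emph{only} minimal prime: every prime $\fq\supseteq\phi(\fp)R$, including those not contained in $\fp$, must contain $\fp$.

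The repair uses exactly the fact you isolated, applied to an arbitrary prime $\fq$. Well-definedness of $\phi_{\fq}$ gives $\phi^{-1}(\fq)\subseteq\fq$. Locality of the contracting map $\phi_{\fq}$ gives $\phi(\fq)\subseteq\fq$. Hence $\phi^{-1}(\fq)=\fq$ for every prime $\fq$. So any prime $\fq\supseteq\phi(\fp)R$ satisfies $\fp\subseteq\phi^{-1}(\fq)=\fq$, and therefore $\fp\subseteq\Rad(\phi(\fp)R)\subseteq\fp$. Drop the restriction $\fq\subseteq\fp$ and the dangling remark about residue fields. Once fixed, this is a self-contained proof of what the paper simply quotes from \cite[Lemma 3.5]{EY12}; the paper uses the same identity $\phi^{-1}(\fr)=\fr$ in Lemma \ref{5.3b}.

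\textbf{Smaller points.}
\begin{itemize}
\item[(ix)] ``Forces $\phi^{-1}(\fm)$ to be a prime containing $\fm$ up to radical'' is not a reason. The actual reason is that if $x\in\fm$ had $\phi(x)$ a unit, then $\phi^i(x)$ would be a unit, contradicting $\phi^i(x)\in\fm^2$. The paper states the same thing as $\phi(\fm)R=R\Rightarrow\phi^i(\fm)R=R$. Your (vi), (vii) and (xi) all rest on (ix), so you should state this.
\item[(vii)] You have, and need, only $\widehat{\phi}^i(\widehat{\fm})\subseteq\phi^i(\fm)\widehat{R}$, not equality.
\item[(xi)] The locally contracting half is just $(\phi^e)_{\fp}=(\phi_{\fp})^e$; say so.
\end{itemize}

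\textbf{Item (x)} takes a different but valid route. $R/\phi(\fm)R$ is a finite-dimensional algebra over $R/\fm$ via $\bar\phi$, and its ideals are subspaces, so it is Artinian. The paper instead argues that a prime $\fp\supseteq\phi(\fm)R$ has $\phi^{-1}(\fp)=\fm$, and that $\fp$ is then maximal because $R$ is finite, hence integral, over $\phi(R)$. Yours avoids integrality and is slightly more elementary.

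Items (i)--(v) and (viii) match the paper's arguments.
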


\begin{proof} (i) and (ii) are straightforward.

(iii), (iv), (v) The proofs of these statements are analogous to the arguments used for the Frobenius endomorphism in
characteristic p settings, which are well-documented in the literature; see, for example, \cite[Propositions 8.2.2,
8.2.4, and 8.2.5]{BH98}.

(vi) See \cite[Lemma 3.5]{EY12}.

(vii) is clear.

(viii) Since the $R$-module $^{\phi}R$ is finitely generated, we obtain an $\widehat{R}$-isomorphism $\widehat{R}\otimes_R \hspace{-0.5mm}^{\phi}R\cong \hspace{-0.5mm} \widehat{^\phi R}$. On the other hand, by (vi), the ideal $\phi(\fm)R$ is
$\fm$-primary,  which implies that the filtrations $\{(\phi(\fm)R)^n\}_n$ and $\{\fm^n\}_n$ are cofinal. Hence, they induce
the same topology on $R$, and so $\widehat{^\phi R}=\hspace{-0.5mm} ^{\widehat{\phi}}\widehat{R}$. Consequently, we get
the desired $\widehat{R}$-isomorphism $\widehat{R}\otimes_R \hspace{-0.5mm}^{\phi}R\cong \hspace{-0.5mm}
^{\widehat{\phi}}\widehat{R}$.

(ix) Since $\phi$ is contracting, there exists a positive integer $i$ such that $\phi^i(\fm)\subseteq \fm^2$. From this,
it follows that $\phi(\fm)R \neq R$, and hence $\phi(\fm)\subseteq \fm$.

(x) Let $\fm$ be a maximal ideal of R. Choose a prime ideal $\fp$ of $R$ such that $\phi(\fm)R\subseteq \fp$.
Then $\fm \subseteq \phi^{-1}(\fp)$, and thus $\fm=\phi^{-1}(\fp)$. As $R$ is $\F^{\phi}\!$-finite, it follows
that $\fp$ is a maximal ideal of $R$. Consequently, the quotient ring $R/{\phi(\fm)R}$ is Artinian.
	
(xi) is clear.
\end{proof}

Among other things, the following example exhibits a contracting endomorphism that is not locally contracting.

\begin{example}\label{2.31} Let $(R,\fm,\Bbbk)$ be a local ring and $\phi:R \to R$ a ring endomorphism.
By Lemma \ref{2.3}(vi) and (x), if $\phi$ is locally contracting or $R$ is $\F^{\phi}\!$-finite, then the
ideal $\phi(\fm)R$ is $\fm$-primary. One might ask whether this remains true when $\phi$ is merely
contracting; however, it does not.  Let $\Bbbk$ be a field, $R=\Bbbk[[X,Y]]$ and $\fm=\langle X, Y \rangle$.
The map $\phi:R \to R$ defined by $\phi(f(X,Y))=f(X^2,0)$ is contracting, yet $\phi(\fm)R=\langle X^2 \rangle$
is not $\fm$-primary. Also, Lemma \ref{2.3}(vi) shows that $\phi$ is not locally contracting.
\end{example}

We conclude this section by recalling the following characterizations of regular rings. For the first one,
refer to \cite[Theorem 13.3]{AIM06}.

\begin{lemma}\label{2.4} Let $(R,\fm,\Bbbk)$ be a local ring with a contracting endomorphism $\phi:R \to R$.
The following are equivalent:
\begin{itemize}
\item[(i)] $R$ is regular.
\item[(ii)] $\fd_R(^{\phi^e}R) = \dim R/{(\phi^e(\fm)R)}$ for every integer $e>0$.
\item[(iii)] $\fd_R(^{\phi^e}R)< \infty$ for some integer $e>0$.
\item[(iv)] $\id_R(^{\phi^e}R)= \dim R$ for every integer $e>0$.
\item[(v)] $\id_R(^{\phi^e}R) < \infty$ for some integer $e>0$.
\end{itemize}
\end{lemma}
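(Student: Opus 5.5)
The statement is essentially \cite[Theorem 13.3]{AIM06}, so my plan is to organize the equivalences around that result and supply the easy implications directly. The trivial implications are (ii)$\Rightarrow$(iii) and (iv)$\Rightarrow$(v). Here I note that $\dim R/\phi^e(\fm)R$ is finite because $R$ is Noetherian local, and that $\dim R<\infty$.

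For (i)$\Rightarrow$(ii) and (i)$\Rightarrow$(iv), I would first pass to the completion. By Lemma \ref{2.3}(vii), $\widehat{\phi}$ is again contracting. Flat and injective dimensions over $R$ of the module ${}^{\phi^e}R$ can be computed via $\Tor$ and $\Ext$ against $\Bbbk$, and these are compatible with completion. Over a regular local ring the local homomorphism $\phi^e$ has finite flat dimension automatically, since $\Bbbk$ has a finite free resolution (the Koszul complex on a regular system of parameters $x_1,\dots,x_d$). Concretely, $\Tor_i^R(\Bbbk,{}^{\phi^e}R)$ is the homology of the Koszul complex on $\phi^e(x_1),\dots,\phi^e(x_d)$ over $R$, by Lemma \ref{2.3}(iii). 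Hence $\fd_R({}^{\phi^e}R)$ equals $d$ minus the depth of $\phi^e(\fm)R$ on $R$, i.e.\ $d-\operatorname{grade}(\phi^e(\fm)R)$. Because $R$ is Cohen--Macaulay, this is $d-\operatorname{height}(\phi^e(\fm)R)=\dim R/\phi^e(\fm)R$, which gives (ii).

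For (iv), I would use the local homomorphism $\phi^e$ of finite flat dimension with Gorenstein source. The Auslander--Buchsbaum type formula for injective dimension of a finitely generated-over-the-target module gives $\id_R({}^{\phi^e}R)=\depth R$ when it is finite. Finiteness follows because $\Ext^i_R(\Bbbk,{}^{\phi^e}R)$ is computed by the Koszul cocomplex and vanishes for $i>d$. Alternatively, since $R$ is regular, every module has injective dimension at most $d$, and Bass-type arguments give equality with $\depth={}\dim R$.

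The main obstacle is the converse directions (iii)$\Rightarrow$(i) and (v)$\Rightarrow$(i), which are the genuine content of the Avramov--Iyengar--Miller theorem. A contracting endomorphism of finite flat dimension forces regularity; this is an analogue of Kunz via Rodicio-type vanishing. The idea is that iterating $\phi$ pushes $\phi^{ne}(\fm)$ into $\fm^{2^n}$, so the maps $\Tor^R_i(\Bbbk,\Bbbk)\to\Tor^R_i(\Bbbk,{}^{\phi^{ne}}R\otimes\Bbbk)$ become zero. Finite flat dimension of ${}^{\phi^{ne}}R$ (stable under composition by Lemma \ref{2.3}(xi)) then forces $\Tor_i^R(\Bbbk,\Bbbk)=0$ for large $i$, i.e.\ $\pd\Bbbk<\infty$. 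The injective version reduces to the flat one via Matlis duality over $\widehat R$. Reproducing this in full requires the machinery of \cite{AIM06}, so I would simply invoke \cite[Theorem 13.3]{AIM06} for these two implications.
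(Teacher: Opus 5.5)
Your proposal is correct and matches the paper, which gives no argument of its own and simply cites \cite[Theorem 13.3]{AIM06} for all five equivalences, exactly as you do for the substantive implications (iii)$\Rightarrow$(i) and (v)$\Rightarrow$(i). Your Koszul-complex verification of (i)$\Rightarrow$(ii) is a correct supplement, and for (i)$\Rightarrow$(iv) you can skip the Bass-type formula altogether: by self-duality of the Koszul complex, $\Ext^d_R(\Bbbk,{}^{\phi^e}R)\cong R/\phi^e(\fm)R\neq 0$, while every $R$-module has injective dimension at most $d=\dim R$ because $R$ is regular.
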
	

Epstein and Yongwei established a non-local analogue of the above result as follows; see \cite[Theorem 3.10]{BH98}.

\begin{lemma}\label{2.4aa} Let $R$ be a reduced ring with a locally contracting endomorphism $\phi:R \to R$.
The following are equivalent:
\begin{itemize}
\item[(i)] $R$ is regular.
\item[(ii)] ${}^{\phi^e}\!R$ is flat.
\item[(iii)] $\Ass_R(\F^{\phi^e}(M))=\Ass_R(M)$ for every $R$-module $M$ and every integer $e>0$.
\end{itemize}
\end{lemma}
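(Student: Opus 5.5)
The plan is to reduce the non-local statement to the local one in Lemma \ref{2.4}, using localization at primes together with Lemma \ref{2.3}(v) and (vi). Throughout, fix $e>0$. By Lemma \ref{2.3}(xi), $\phi^e$ is again locally contracting, so it suffices to treat $\phi$ itself.

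For (i)$\Rightarrow$(ii), we check flatness of ${}^{\phi}R$ locally. Let $\fq$ be a prime of $R$ (acting on the right-hand copy) and set $\fp=\phi^{-1}(\fq)$.

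1. The localization $({}^{\phi}R)_\fq$ is a module over $R_\fp$.
2. Since $\Rad(\phi(\fp)R)=\fp$ by Lemma \ref{2.3}(vi), we have $\fp\subseteq\fq$.
3. One needs flatness of $R_\fq$ over $R_\fp$ via $\phi$. To get it, compare with $\phi_\fq:R_\fq\to R_\fq$. This map is contracting, and $R_\fq$ is regular, so Lemma \ref{2.4} gives $\fd_{R_\fq}({}^{\phi_\fq}R_\fq)=\dim R_\fq/\phi(\fq)R_\fq$.
4. That dimension is $0$ by Lemma \ref{2.3}(vi), so ${}^{\phi_\fq}R_\fq$ is flat over $R_\fq$.

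The subtle point, which I expect to be the main obstacle, is the bookkeeping between $\phi_\fq$ and the map $R_\fp\to R_\fq$ induced by $\phi$. The claim to establish is that $\phi^{-1}(\fq)=\fq$ when $\phi$ is locally contracting, so that $\fp=\fq$ and the two maps coincide. I would first try to prove this by showing $\phi(s)\notin\fq$ for $s\notin\fq$, which is needed for $\phi_\fq$ to be defined at all. That gives $\phi^{-1}(\fq)\subseteq\fq$. The reverse inclusion $\fq\subseteq\phi^{-1}(\fq)$ follows from $\phi(\fq)\subseteq\Rad(\phi(\fq)R)=\fq$.

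For (ii)$\Rightarrow$(iii), flatness gives the standard formula
\[
\Ass_R(M\otimes_R{}^{\phi}R)=\bigcup_{\fp\in\Ass M}\Ass_R({}^{\phi}R/\fp\,{}^{\phi}R)=\bigcup_{\fp\in\Ass M}\Ass_R(R/\phi(\fp)R),
\]
valid for flat extensions of Noetherian rings, at least for finitely generated $M$, with the general case following by direct limits. Then:

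1. Reducedness of $R$ and flatness give that $R/\phi(\fp)R$ has only minimal associated primes over $\phi(\fp)R$. I would justify this via going-down, or by noting that $R_\fp$ regular implies $R/\phi(\fp)R$ is flat over the field $\kappa(\fp)$ after localizing.
2. By Lemma \ref{2.3}(vi), the only minimal prime of $\phi(\fp)R$ is $\fp$.

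Hence $\Ass_R(\F^\phi(M))=\Ass_R(M)$.

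For (iii)$\Rightarrow$(i), take $M=R/\fp$. Then $\F^{\phi}(R/\fp)\cong R/\phi(\fp)R$ by Lemma \ref{2.3}(iv), so $\phi(\fp)R$ is $\fp$-primary. Localize at a maximal ideal $\fm$ and use reducedness to run a Kunz-type argument showing that $\fd_{R_\fm}({}^{\phi}R_\fm)<\infty$; regularity then follows from Lemma \ref{2.4}. Here I would in fact lean on the cited source, since this implication is where the reduced hypothesis is genuinely used.
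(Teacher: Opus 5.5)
\textbf{Verdict: there is a genuine gap in (iii)$\Rightarrow$(i), plus a fixable flaw in (ii)$\Rightarrow$(iii). Your (i)$\Rightarrow$(ii) is sound.}

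For (i)$\Rightarrow$(ii), $\phi^{-1}(\fq)=\fq$ is built into the definition of a locally contracting map. So the localized map is $\phi_\fq$, and Lemma \ref{2.4} gives flat dimension $0$.

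The gap is (iii)$\Rightarrow$(i). This is the only implication with real content and the only one that uses reducedness. ``A Kunz-type argument'' plus an appeal to the source is not a proof. For the record, the paper does not prove this lemma either; it only quotes Theorem~3.10 of Epstein--Yao.

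Moreover, the one consequence you extract from (iii), that $\phi(\fp)R$ is $\fp$-primary for every prime $\fp$, cannot suffice. Take the Frobenius map $f$ on $R=k[[t^2,t^3]]$, with $k$ a field of characteristic $p$. The only primes are $0$ and $\fm$. $f(0)R=0$ is $0$-primary because $R$ is a domain, and $f(\fm)R$ is $\fm$-primary. Yet $R$ is not regular. So (iii) has to be applied to modules not of the form $R/\fp$, and reducedness has to enter at a specific point.

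Here is one way to close the gap.
\begin{enumerate}
\item Since $\phi^{-1}$ fixes every prime, and restricting scalars along $\phi$ replaces associated primes by their contractions, $\Ass_R\F^{\phi}(K)$ is the same for both $R$-structures on $K\otimes_R{}^{\phi}R$.
\item Suppose ${}^{\phi}R$ is not flat. Choose $\fp$ minimal among the primes at which it is not flat, and a finitely generated $L$ such that $T=\Tor_1^R(L,{}^{\phi}R)$, localized via the $L$-side structure, has $T_\fp\neq 0$.
\item Minimality gives $T_\fq=0$ for $\fq\subsetneq\fp$, so $\fp\in\Ass_R T$.
\item From $0\to K\to R^n\to L\to 0$ you get $T\hookrightarrow\F^{\phi}(K)$. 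Hence $\fp\in\Ass_R\F^{\phi}(K)=\Ass_R K\subseteq\Ass R$.
\item As $R$ is reduced, $\fp$ is a minimal prime. So $R_\fp$ is a field and ${}^{\phi}R$ is flat at $\fp$ after all, a contradiction.
\item Regularity then follows from Lemma \ref{2.4} applied at each maximal ideal.
\end{enumerate}
Reducedness cannot be dropped: the Frobenius map on $k[x]/(x^2)$ satisfies (iii), but this ring is not regular.

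The smaller flaw is in (ii)$\Rightarrow$(iii). The fact you need, $\Ass_R(R/\phi(\fp)R)=\{\fp\}$, is true, but your reasons do not prove it. Going-down controls only minimal primes, not embedded ones. Regularity of $R_\fp$ is not available at that stage. Flatness over a field is vacuous.

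The correct reason uses no reducedness. Tensor the injection $R/\fp\xrightarrow{\,s\,}R/\fp$, for $s\notin\fp$, with the flat module ${}^{\phi}R$. This shows $\phi(s)$ is a nonzerodivisor on $R/\phi(\fp)R$. So every $\fq\in\Ass_R(R/\phi(\fp)R)$ satisfies $\phi^{-1}(\fq)\subseteq\fp\subseteq\phi^{-1}(\fq)$, whence $\fq=\phi^{-1}(\fq)=\fp$.
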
	

\section{Characterization of regular rings via the functor $\widetilde{\F}^{\phi}$}

In this section, Theorems \ref{4.1a} and \ref{4.4a} constitute the main results, providing characterizations of
regular local rings via contracting endomorphisms. We begin with the following useful result.

\begin{lemma}\label{3.1a}  Let $\psi:(R,\fm,\Bbbk)\to (S,\fn,\Bbb L)$ be a homomorphism of local rings such that
$S$ is finitely generated as an $R$-module via $\psi$. Then, for any $R$-module $M$, we have an $S$-isomorphism
$$\Hom_R\left(M,\E_R\left(\Bbbk\right)\right)\otimes_R S \cong \Hom_S\left(\Hom_R\left(S,M\right),\E_S\left(\Bbb
L\right)\right).$$
\end{lemma}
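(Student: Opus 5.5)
The plan is to identify $\E_S(\Bbb L)$ with a coinduced module, and then reduce both sides to the same $R$-module $\Hom_R(\Hom_R(S,M),\E_R(\Bbbk))$. The two tools are Hom–tensor adjunction and the standard tensor-evaluation isomorphism. Throughout, write $E=\E_R(\Bbbk)$ and regard $S$ as an $R$-module via $\psi$.

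\emph{Step 1: $\E_S(\Bbb L)\cong \Hom_R(S,E)$ as $S$-modules.}

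First, $\Hom_R(S,E)$ is an injective $S$-module. Indeed, $\Hom_S(-,\Hom_R(S,E))\cong \Hom_R(-,E)$ is exact.

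Second, every element of $\Hom_R(S,E)$ is killed by a power of $\fn$. Since $S$ is module-finite over $R$, the ring $S/\fm S$ is Artinian local, so $\fm S$ is $\fn$-primary. Moreover each $\theta\in\Hom_R(S,E)$ is killed by some $\fm^t$: the image of the finitely generated module $S$ is a finitely generated submodule of the $\fm$-torsion module $E$. Hence $\Hom_R(S,E)$ is an $\fn$-torsion injective $S$-module, and so it is a direct sum of copies of $\E_S(\Bbb L)$.

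Third, the number of copies equals $\dim_{\Bbb L}$ of its socle. We compute
$$\Hom_S(\Bbb L,\Hom_R(S,E))\cong\Hom_R(\Bbb L,E)\cong\Hom_\Bbbk(\Bbb L,\Bbbk).$$
Here $\Bbb L$ is a finite extension of $\Bbbk$ (it is a quotient of the finite $\Bbbk$-module $S/\fm S$). So $\Hom_\Bbbk(\Bbb L,\Bbbk)$ is an $\Bbb L$-vector space of dimension $1$. Therefore $\Hom_R(S,E)\cong\E_S(\Bbb L)$.

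\emph{Step 2: Rewrite the right-hand side.}

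By Step 1 and Hom–tensor adjunction,
$$\Hom_S(\Hom_R(S,M),\E_S(\Bbb L))\cong\Hom_S(\Hom_R(S,M),\Hom_R(S,E))\cong \Hom_R(\Hom_R(S,M)\otimes_S S,E)=\Hom_R(\Hom_R(S,M),E).$$
This isomorphism is $S$-linear.

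\emph{Step 3: Tensor evaluation for the left-hand side.}

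Define $\sigma:\Hom_R(M,E)\otimes_R S\to \Hom_R(\Hom_R(S,M),E)$ by $\sigma(f\otimes s)(g)=f(g(s))$. Checking that $\sigma$ is well defined and $S$-linear is routine. By the tensor-evaluation lemma, $\sigma$ is an isomorphism whenever $S$ is finitely presented over $R$ and $E$ is injective. Both hold, since $R$ is Noetherian and $S$ is finitely generated over $R$.

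To prove that lemma directly, first note that $\sigma$ is natural in $S$ and is an isomorphism for $S=R$, hence for finite free modules. Then take a finite free presentation $R^m\to R^n\to S\to 0$. The left side is right exact in $S$. The right side, $\Hom_R(\Hom_R(-,M),E)$, is also right exact in $S$, because $\Hom_R(-,M)$ is left exact and $\Hom_R(-,E)$ is exact. The five lemma finishes the argument.

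Combining Steps 2 and 3 gives the desired $S$-isomorphism. I expect Step 1 to be the main point requiring care, specifically the socle computation and the $\fn$-torsion argument. The rest is formal.
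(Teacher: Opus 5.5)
Your proposal is correct and follows the paper's proof: tensor (Hom) evaluation to reach $\Hom_R(\Hom_R(S,M),\E_R(\Bbbk))$, then Hom--tensor adjunction, then the identification $\Hom_R(S,\E_R(\Bbbk))\cong \E_S(\mathbb{L})$. The only difference is that you prove this identification directly (injectivity over $S$, $\fn$-torsion, and a socle of $\mathbb{L}$-dimension one), whereas the paper cites it from \cite[Lemma 3.7]{MA14}.
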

	
\begin{proof} By \cite[Lemma 3.7]{MA14}, there is an $S$-isomorphism$$\Hom_R\left(S,\E_R\left(\Bbbk \right)\right)
\cong \E_S\left(\Bbb L \right).$$ Hence, we obtain the following chain of $S$-isomorphisms:
\[\begin{array}{rlllllllllll}
\Hom_R\left(M,\E_R\left(\Bbbk \right)\right)\otimes_RS &\cong \Hom_R\left(\Hom_R\left(S,M\right),\E_R\left(\Bbbk
\right)\right)\\
&\cong\Hom_R\left(\Hom_R\left(S,M\right)\otimes_SS,\E_R\left(\Bbbk \right)\right) &  \\
&\cong \Hom_S\left(\Hom_R\left(S,M\right),\Hom_R\left(S,\E_R\left(\Bbbk \right)\right)\right)\\
&\cong \Hom_S\left(\Hom_R\left(S,M\right),\E_S\left(\Bbb L \right)\right).
\end{array}\]
The first isomorphism follows from the Hom evaluation map, while the third isomorphism results from the adjointness
between the $\Hom$ and tensor product functors.
\end{proof}

From now on, when $(R,\fm,\Bbbk)$ is a local ring, we let $\D(-)=\Hom_R(-,\E_R(\Bbbk))$ denote the Matlis duality
functor. As an immediate consequence of the above result, we have the first part of the following corollary.

\begin{corollary}\label{3.2a} Let $(R,\fm,\Bbbk)$ be a local ring with a ring endomorphism $\phi: R\rightarrow R$.
Assume that $R$ is $\F^{\phi}\!$-finite. Then, for any $R$-module $M$ and each positive integer $e$, there are the
following natural $R$-isomorphisms:
\begin{itemize}
\item[(i)] $D(\widetilde{\F}^{{\phi}^e}(M))\cong  \F^{{\phi}^e}(\D(M))$.
\item[(ii)] $\D(\F^{{\phi}^e}(M))\cong \widetilde{\F}^{{\phi}^e}(\D(M))$.
\end{itemize}
\end{corollary}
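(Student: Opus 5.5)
Part (i) should follow from Lemma \ref{3.1a} applied with $S=R$ and $\psi=\phi^e$. Part (ii) should follow from the same identification of injective hulls, combined with Hom--tensor adjointness. Throughout, the $R$-module structure on $\F^{\phi^e}(M)$ and on $\widetilde{\F}^{\phi^e}(M)$ is the right structure coming from the copy ${}^{\phi^e}\!R$. This is exactly the $S$-module structure in Lemma \ref{3.1a} when $S$ is ${}^{\phi^e}\!R$.

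First I would check that the hypotheses of Lemma \ref{3.1a} hold. Since $R$ is $\F^{\phi}$-finite, Lemma \ref{2.3}(xi) shows that ${}^{\phi^e}\!R$ is finitely generated. Next I need $\phi^e:(R,\fm)\to(R,\fm)$ to be a local homomorphism. Because $R$ is a finite $R$-algebra via $\phi^e$, the map $\phi^e$ is integral. Hence the contraction $(\phi^e)^{-1}(\fm)$ of the maximal ideal $\fm$ is a maximal ideal of $R$, so it equals $\fm$. This gives $\phi^e(\fm)\subseteq\fm$, in the same spirit as the argument in Lemma \ref{2.3}(x). So $\psi=\phi^e$ is a local homomorphism with residue field $\Bbbk$ on both sides, and $\E_S(\Bbb L)=\E_R(\Bbbk)$ as the underlying module.

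For (i), Lemma \ref{3.1a} then reads
\[
\D(M)\otimes_R{}^{\phi^e}\!R\;\cong\;\Hom_R\bigl(\Hom_R({}^{\phi^e}\!R,M),\E_R(\Bbbk)\bigr),
\]
which is $\F^{\phi^e}(\D(M))\cong\D(\widetilde{\F}^{\phi^e}(M))$.

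For (ii), I would compute directly. Write $S={}^{\phi^e}\!R$ and $E=\E_R(\Bbbk)$, and use the isomorphism $\Hom_R(S,E)\cong E$ of $S$-modules from \cite[Lemma 3.7]{MA14}, as in the proof of Lemma \ref{3.1a}. This gives the chain
\[
\D(M\otimes_R S)=\Hom_S(M\otimes_R S,E)\cong\Hom_S\bigl(M\otimes_RS,\Hom_R(S,E)\bigr)\cong\Hom_R\bigl(M\otimes_R S,E\bigr)\cong\Hom_R\bigl(S,\Hom_R(M,E)\bigr).
\]
The last isomorphism is adjointness over $R$, with the left structure of $S$ used in the tensor product. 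The final term is $\widetilde{\F}^{\phi^e}(\D(M))$. Each isomorphism is natural in $M$, so the composite is natural as well.

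The main obstacle is bookkeeping rather than a new idea. I must check that each isomorphism in the chain is $R$-linear for the correct action, namely the right $R$-action through the second factor ${}^{\phi^e}\!R$. In particular, the adjunction isomorphisms must transport the $S$-structure on $\Hom_R(M\otimes_RS,E)$ to the action $(\theta\cdot r)(s)=\theta(sr)$ on $\Hom_R(S,\D(M))$. I expect this to be a routine element-wise check.
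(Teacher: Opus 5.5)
Your proposal is correct and follows the paper. Part (i) is exactly Lemma \ref{3.1a} applied to $\psi=\phi^e$ with $S={}^{\phi^e}\!R$, with finiteness from Lemma \ref{2.3}(xi). For part (ii), the paper simply cites \cite[Proposition 3.8]{MA14}; your chain through $\Hom_R(S,\E_R(\Bbbk))\cong \E_R(\Bbbk)$ and the two Hom--tensor adjunctions, including the check of the right $R$-action, is a correct self-contained proof of that cited fact. Your integrality argument that $\phi^e(\fm)\subseteq\fm$ also supplies the locality hypothesis of Lemma \ref{3.1a}, which the paper leaves implicit.
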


\begin{proof} (i) is immediate by Lemma \ref{3.1a}. Note that as $R$ is $\F^{\phi}\!$-finite, by Lemma \ref{2.3}(xi), it
is also $\F^{\phi^e}\!$-finite.

(ii) See \cite[Proposition 3.8]{MA14}.
\end{proof}

We begin by presenting our first characterization of regular rings using the functor $\widetilde{\F}^{\phi}$.
		
\begin{theorem}\label{4.1a} Let $(R,\fm,\Bbbk)$ be a local ring with a contracting endomorphism
$\phi:R\rightarrow R$ and $\mathbb{A}(R)$ denote the category of Artinian $R$-modules. The following are equivalent:
\begin{itemize}
\item[(i)] $R$ is regular and the quotient ring $R/{\phi(\fm)R}$ is Artinian.
\item[(ii)] The functor $\Hom_R ({^{\phi}}\hspace{-0.5mm}R,-)$ is exact on $\mathbb{A}(R)$.
\end{itemize}
\end{theorem}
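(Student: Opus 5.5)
We will reduce both implications to a vanishing statement for $\Ext^i_R({}^{\phi}\!R,-)$ on Artinian modules. Matlis duality and the adjunction isomorphism then convert it into vanishing of $\Tor^R_i({}^{\phi}\!R,-)$.

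The basic tool is the isomorphism $\Ext^i_R(P,\Hom_R(N,\E_R(\Bbbk)))\cong \Hom_R(\Tor_i^R(P,N),\E_R(\Bbbk))$, valid for all $R$-modules $P,N$. It follows from the adjunction isomorphism used in the proof of Lemma \ref{3.1a} together with the injectivity of $\E_R(\Bbbk)$. We also use that every Artinian $R$-module $A$ satisfies $A\cong \D(\D(A))$ as $R$-modules, since $A$ is naturally an $\widehat{R}$-module and Matlis duality over $\widehat{R}$ uses the same $\E_R(\Bbbk)$.

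\emph{(i)$\Rightarrow$(ii).} Suppose $R$ is regular and $R/\phi(\fm)R$ is Artinian. By Lemma \ref{2.4}(ii) with $e=1$, $\fd_R({}^{\phi}\!R)=\dim R/\phi(\fm)R=0$, so ${}^{\phi}\!R$ is flat. For an Artinian module $A$ write $A\cong \D(N)$ with $N=\D(A)$. Then
$$\Ext^1_R({}^{\phi}\!R,A)\cong \D\bigl(\Tor^R_1({}^{\phi}\!R,N)\bigr)=0.$$
Hence for any short exact sequence $0\to A'\to A\to A''\to 0$ in $\mathbb{A}(R)$, the long exact sequence shows that $\Hom_R({}^{\phi}\!R,-)$ stays exact on it.

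\emph{(ii)$\Rightarrow$(i).} First we show $\Ext^i_R({}^{\phi}\!R,A)=0$ for all $i\ge 1$ and all Artinian $A$. Embed $A$ into $\E_R(\Bbbk)^n$; the cokernel $C$ is again Artinian. Exactness of $\Hom_R({}^{\phi}\!R,-)$ on $0\to A\to \E_R(\Bbbk)^n\to C\to 0$, together with injectivity of $\E_R(\Bbbk)^n$, gives $\Ext^1_R({}^{\phi}\!R,A)=0$. Dimension shifting along Artinian cosyzygies gives the higher vanishing. Taking $A=\E_R(\Bbbk)=\D(R)$ alone yields nothing, so instead take $A=\D(\Bbbk)\cong\Bbbk$. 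The displayed isomorphism gives $\D(\Tor^R_i({}^{\phi}\!R,\Bbbk))=0$, and therefore
$$\Tor^R_i({}^{\phi}\!R,\Bbbk)=0\quad\text{for all } i\ge 1.$$
Computing with a minimal free resolution of $\Bbbk$ and Lemma \ref{2.3}(iii), these Tor modules are the homology of the complex obtained by applying $\phi$ to the entries of the differentials. For a contracting $\phi$ this is the situation analysed in \cite{AIM06}, and the plan is to invoke their result that vanishing of $\Tor^R_i({}^{\phi}\!R,\Bbbk)$ for some $i\ge1$ forces $R$ to be regular. This is the step I expect to be the main obstacle. The point needing care is that ${}^{\phi}\!R$ need not be finitely generated, so flatness cannot be read off naively from Tor against $\Bbbk$. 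If the cited statement is not available in that form, I would try to prove it directly using the contracting property: iterate to $\phi^e$ with $\phi^e(\fm)\subseteq\fm^{2}$ and compare with the minimal resolution.

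Once $R$ is regular, let $\fm=\langle x_1,\dots,x_n\rangle$ with $n=\dim R$, a regular system of parameters. The Koszul complex on $x_1,\dots,x_n$ resolves $\Bbbk$, so by Lemma \ref{2.3}(iii) we get $\Tor^R_i({}^{\phi}\!R,\Bbbk)\cong \H_i(\phi(x_1),\dots,\phi(x_n);R)$. Vanishing for all $i\ge1$ means $\phi(x_1),\dots,\phi(x_n)$ is an $R$-regular sequence, so $\phi(\fm)R$ has height $n=\dim R$. Hence $R/\phi(\fm)R$ is Artinian, which completes (i).
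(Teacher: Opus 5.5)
There are two problems. The first is the real gap; the second is a false claim that can be repaired.

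\textbf{(ii)$\Rightarrow$(i).} You do not prove the passage from $\Tor^R_i({}^{\phi}\!R,\Bbbk)=0$ to regularity. You defer it to a result of Avramov--Iyengar--Miller whose exact form you are unsure of, so the implication is incomplete as written. The detour through $\Bbbk$ is also unnecessary. You have already shown $\Ext^1_R({}^{\phi}\!R,A)=0$ for every Artinian $A$. Apply this to $A=\D(M)$ for an \emph{arbitrary} finitely generated $M$; this module is Artinian, and your adjunction isomorphism needs no reflexivity. You get $\D(\Tor_1^R({}^{\phi}\!R,M))=0$, so $\Tor_1^R({}^{\phi}\!R,M)=0$ for all finitely generated $M$, and hence ${}^{\phi}\!R$ is flat. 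Lemma \ref{2.4} then gives that $R$ is regular and $\dim R/\phi(\fm)R=\fd_R({}^{\phi}\!R)=0$. This is exactly the paper's argument: it obtains $\Ext^1_R({}^{\phi}\!R,\D(M))=0$ by dualizing $0\to K\to R^n\to M\to 0$. If you want to keep only the information at $\Bbbk$, the local flatness criterion closes the gap. By Lemma \ref{2.3}(ix), $\phi$ is a local homomorphism, and ${}^{\phi}\!R$ is finite over the target ring, hence $\fm$-adically ideal-separated. So $\Tor_1^R(\Bbbk,{}^{\phi}\!R)=0$ already forces flatness, which is precisely what handles the non-finite-generation issue you were worried about. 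Your Koszul argument for the Artinianness of $R/\phi(\fm)R$ is correct.

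\textbf{(i)$\Rightarrow$(ii).} Your tool $A\cong\D(\D(A))$ for all Artinian $A$ is false unless $R$ is complete. The first dual $\Hom_R(A,\E)$ does agree with $\Hom_{\widehat R}(A,\E)$. The second dual, however, is $\Hom_R(-,\E)$ applied to the non-torsion module $\D(A)$, and that differs from $\Hom_{\widehat R}(-,\E)$. For instance, $\D(\D(\E))\cong\Hom_R(\widehat R,\E)$ contains $\Hom_R(\widehat R/R,\E)$. This submodule is nonzero when $R\neq\widehat R$, because $\E$ is a cogenerator. It has no nonzero $\fm$-torsion, since $\widehat R/R=\fm(\widehat R/R)$. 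So $\D(\D(\E))$ is not even Artinian. The vanishing $\Ext^1_R(F,A)=0$ for flat $F$ and Artinian $A$ is still true, but it needs a different reason. Artinian modules are pure-injective, hence cotorsion; this is what the paper's appeal to Xu provides. Alternatively, use $\Ext^1_R(F,A)\cong\Ext^1_{\widehat R}(F\otimes_R\widehat R,A)$ and run your argument over $\widehat R$, where Matlis reflexivity of Artinian modules does hold.
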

	
\begin{proof} (i)$\Rightarrow$(ii) By Lemma \ref{2.4}, the $R$-module ${^{\phi}}\hspace{-0.5mm}R$ is flat. Consequently,
by \cite[Example on page 63]{Xu06},  $\Ext_R^1({^{\phi}}R,A)=0$ for every Artinian $R$-module $A$. Consider an exact
sequence of Artinian $R$-modules: $$0 \to A \to B \to C\to 0. \  \  (\dag)$$ Applying the functor $\Hom_R({^{\phi}}R,-)$
to $(\dag)$ yields the exact sequence: $$0\to \Hom_R({^{\phi}}R,A)\to  \Hom_R({^{\phi}}R,B)\to \Hom_R ({^{\phi}}R,C)\to
0.$$
Therefore, the functor $\Hom_R ({^{\phi}}\hspace{-0.5mm}R, -)$ is exact on $\mathbb{A}(R)$.
		
(ii)$\Rightarrow$(i) We begin by showing that the $R$-module $^{\phi}R$ is flat. This will be accomplished by showing
that $\Tor_1^R (M,{^{\phi}}R)=0$ for every finitely generated $R$-module $M$. Let $M$ be a finitely generated $R$-module.
We may consider an exact sequence: $$0 \to K \to R^n \to M \to 0.$$ Let $\E=\E_R(\Bbbk)$. Applying the exact functor
$\D(-)$ to this sequence
induces the exact sequence of Artinian $R$-modules: $$0\to \D(M)\to E^n\to \D(K)\to 0.$$ By applying the functor
$\Hom_R({^{\phi}}R,-)$ to the latter sequence, we obtain the exact sequence: $$0\to \Hom_R ({^{\phi}}R, \D(M))\to
\Hom_R ({^{\phi}}R, E^n) \to \Hom_R ({^{\phi}}R, \D(K))\to \Ext_R^1 ({^{\phi}}R,\D(M)) \to 0.$$ This implies that
$\Ext_R^1 ({^{\phi}}R,\D(M))=0$. Since $$\Ext_R^1 ({^{\phi}}R,\D(M))\cong \D(\Tor_1 ^R (M, {^{\phi}}R)),$$ we
deduce that $\Tor_1^R (M, {^{\phi}}R)=0$. Therefore ${^{\phi}}R$ is a flat $R$-module, and so by Lemma \ref{2.4}
the ring $R$ is regular and the quotient ring $R/{\phi(\fm)R}$ is Artinian.
\end{proof}

We now present the following corollary, which can be viewed as a dual statement to Kunz's theorem.

\begin{corollary}\label{4.2a} Let $(R,\fm,\Bbbk)$ be a local ring of prime characteristic $p$, and let $f:R \to R$
be the Frobenius map. Let $\mathbb{A}(R)$ denote the category of Artinian $R$-modules. The following are equivalent:
\begin{itemize}
\item[(i)] $R$ is regular.
\item[(ii)] The functor $\Hom_R({^{f}}R,-)$ is exact on $\mathbb{A}(R)$.
\end{itemize}
\end{corollary}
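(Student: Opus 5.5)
This will follow from Theorem \ref{4.1a} applied with $\phi=f$. The only thing to check is that the Frobenius map meets its hypotheses and that the extra condition in Theorem \ref{4.1a}(i) holds automatically, so that (i) of the corollary is equivalent to (i) of the theorem.

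First, $f$ is contracting. For $x\in\fm$ we have $f(x)=x^p\in\fm^p\subseteq\fm^2$, since $p\geq 2$. So $f(\fm)\subseteq\fm^2$, and the definition in \ref{2.1}(i) holds with $i=1$.

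Second, $R/f(\fm)R$ is always Artinian. Write $\fm=\langle a_1,\dots,a_t\rangle$. Then $f(\fm)R=\langle a_1^p,\dots,a_t^p\rangle$, and its radical contains each $a_j$, so it equals $\fm$. Hence $f(\fm)R$ is $\fm$-primary and $R/f(\fm)R$ is Artinian. Equivalently, $f$ is locally contracting, as noted in the introduction, so Lemma \ref{2.3}(vi) gives the same conclusion. This step needs no $F$-finiteness hypothesis, which matters because $R$ is not assumed $F$-finite.

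It follows that condition (i) of Theorem \ref{4.1a} for $\phi=f$ reduces to ``$R$ is regular'', which is condition (i) of the corollary. Condition (ii) of the theorem is literally condition (ii) of the corollary. Theorem \ref{4.1a} then gives the equivalence.

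There is no genuine obstacle. The only point needing care is the Artinian condition on $R/f(\fm)R$, and the radical computation above settles it directly.
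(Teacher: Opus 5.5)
Your proof is correct and matches the paper's argument: the paper also notes that the $p$-th powers of generators of $\fm$ generate $f(\fm)R$, so $R/f(\fm)R$ is Artinian, and then applies Theorem \ref{4.1a}. Your explicit check that $f(\fm)\subseteq\fm^p\subseteq\fm^2$, so that $f$ is contracting with $i=1$, supplies a hypothesis the paper leaves implicit.
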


\begin{proof}If $x_1, \ldots, x_{\ell} \in R$ generate $\fm$, then their $p$-th powers generate $f(\fm)R$. Thus,
the ideal $f(\fm)R$ is $\fm$-primary, and so the quotient ring $R/f(\fm)R$ is Artinian. The result then follows
directly from Theorem \ref{4.1a}.
\end{proof}

The following lemma will be crucial for establishing our second characterization of regular rings. It
refines \cite[Korollar 5.5]{HE74}.

\begin{lemma}\label{4.3a}  Let $(R,\fm,\Bbbk)$ be a local ring with a contracting endomorphism $\phi:R\rightarrow R$.
Assume that $R$ is $\F^{\phi}\!$-finite and Cohen-Macaulay. The following are equivalent:
\begin{itemize}
\item[(i)] $R$ is Gorenstein.
\item[(ii)] $R$ possesses a dualizing module $\omega_R$ and for every finitely generated $R$-module $N$ with
$\id_R(N)<\infty$ and for all $e>0$, there exists a functorial isomorphism $\widetilde{\F}^{\phi^{e}}(N)\cong
\F^{\phi^{e}}(N)$.
\item[(iii)] $R$ possesses a dualizing module $\omega_R$ and for every finitely generated $R$-module $N$ with
$\id_R(N)<\infty$,  there exists a functorial isomorphism $\widetilde{\F}^{\phi}(N)\cong \F^{\phi}(N)$.
\end{itemize}	
Moreover, if $R$ is complete, then the above conditions are equivalent to the following two conditions:
\begin{itemize}
\item[(iv)] For every Artinian $R$-module $A$ with $\pd_R(A)<\infty$ and for all $e>0$, there exists a functorial
isomorphism $\widetilde{\F}^{\phi^{e}}(A)\cong \F^{\phi^{e}}(A)$.
\item[(v)] For every Artinian $R$-module $A$ with $\pd_R(A)<\infty$, there exists a functorial isomorphism
$\widetilde{\F}^{\phi}(A)\cong \F^{\phi}(A)$.
\end{itemize}	
\end{lemma}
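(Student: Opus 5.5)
The plan is to prove (i)$\Rightarrow$(ii)$\Rightarrow$(iii)$\Rightarrow$(i), and then, in the complete case, (ii)$\Leftrightarrow$(iv) and (iii)$\Leftrightarrow$(v) via Matlis duality and Corollary \ref{3.2a}.

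For (i)$\Rightarrow$(ii), assume $R$ is Gorenstein. Then $\omega_R=R$ is a dualizing module. Moreover, every finitely generated $N$ with $\id_R(N)<\infty$ has finite projective dimension; over a Gorenstein local ring these classes coincide. We first prove the isomorphism for $N=R$ and then propagate it.

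Since $R$ is $\F^{\phi^e}$-finite (Lemma \ref{2.3}(xi)) and $\phi^e$ is contracting, ${}^{\phi^e}R$ is a finite $R$-module. Because $R$ is Gorenstein, Lemma \ref{2.4} gives $\id_R({}^{\phi^e}R)<\infty$. A finite module of finite injective dimension over a Gorenstein ring is maximal Cohen–Macaulay, and hence $\omega$-dual to a free module.

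The key computation is $\widetilde{\F}^{\phi^e}(R)=\Hom_R({}^{\phi^e}R,R)\cong R$ as right (that is, $S=R$ via $\phi^e$) modules. Regard $S$ as a finite local $R$-algebra via $\phi^e$, as in Lemma \ref{3.1a}. Then $\Hom_R(S,\omega_R)\cong\omega_S$. Here $S$ is Gorenstein and $\omega_R=R$, so $\Hom_R(S,R)\cong S$; this is exactly the finite-map analogue of \cite[Lemma 3.7]{MA14}. Together with $\F^{\phi^e}(R)\cong R$ (Lemma \ref{2.3}(i)), this gives the isomorphism on $R$, hence on finite free modules.

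To pass to arbitrary $N$, take a finite free resolution $F_\bullet\to N$. Since $S$ is flat over $R$ by Lemma \ref{2.4} and $R$ is regular here, $\F^{\phi^e}$ is exact. On the other side, $\Ext^i_R(S,N)=0$ for $i>0$: $S$ is MCM and $N$ has finite injective dimension, so Ischebeck's lemma applies. Hence both functors carry the resolution to exact complexes. The natural map $\F^{\phi^e}(-)\to\widetilde{\F}^{\phi^e}(-)$, induced by $\Hom_R(S,R)\otimes_R- \to\Hom_R(S,-)$, is an isomorphism on free modules. By the five lemma it is therefore an isomorphism on $N$. This gives (ii) with functoriality.

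The step (ii)$\Rightarrow$(iii) is trivial (take $e=1$). For (iii)$\Rightarrow$(i), apply (iii) to $N=\omega_R$, which has finite injective dimension. This yields $\Hom_R({}^{\phi}R,\omega_R)\cong \omega_R\otimes_R{}^{\phi}R$. The left side is $\omega_S=\omega_R$ by \cite[Lemma 3.7]{MA14}-type duality, so $\omega_R\otimes_R {}^\phi R\cong \omega_R$. Comparing minimal numbers of generators, $\mu(\omega_R\otimes_R {}^\phi R)\ge\mu(\omega_R)$, with equality only in restricted cases. The cleaner route is to compare type: iterating the isomorphism $e$ times gives $\F^{\phi^e}(\omega_R)\cong\omega_R$. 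Then take a minimal presentation $R^b\xrightarrow{A}R^r\to\omega_R\to0$ with entries of $A$ in $\fm$. By Lemma \ref{2.3}(iii), $\F^{\phi^e}(\omega_R)$ has presentation matrix $\phi^e(A)$ with entries in $\fm^{2^{\lfloor e/i\rfloor}}$, which lie deep in high powers of $\fm$. Its Fitting ideals are then contained in arbitrarily high powers of $\fm$, while they must equal those of $\omega_R$. This forces $\operatorname{Fitt}_{r-1}(\omega_R)=0$. I expect this step to be the main obstacle: one must arrange a rank argument, namely that $\omega_R$ has rank $1$ generically, so $\operatorname{Fitt}_0\neq0$ unless $r=1$. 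The conclusion is $r=1$, i.e. $\omega_R\cong R$, so $R$ is Gorenstein. If the Fitting argument fails, I would instead use the Herzog-style argument of \cite[Korollar 5.5]{HE74}, comparing Bass/type numbers.

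For the complete case, every Artinian $A$ is $\D(N)$ with $N=\D(A)$ finite, and $\pd_R(A)<\infty$ iff $\id_R(N)<\infty$. Corollary \ref{3.2a} gives $\D(\widetilde{\F}^{\phi^e}(N))\cong\F^{\phi^e}(A)$ and $\D(\F^{\phi^e}(N))\cong\widetilde{\F}^{\phi^e}(A)$. Hence functorial isomorphisms on $N$ and on $A$ correspond under Matlis duality, which gives (ii)$\Leftrightarrow$(iv) and (iii)$\Leftrightarrow$(v). The dualizing module exists automatically since $R$ is complete and Cohen–Macaulay.
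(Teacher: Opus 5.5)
Your overall plan follows the paper's. For (i)$\Rightarrow$(ii) you use maximal Cohen--Macaulayness of ${}^{\phi^e}\!R$, Ext-vanishing against modules of finite injective dimension, and Herzog's reduction to free modules. For (iii)$\Rightarrow$(i) you use $\F^{\phi}(\omega_R)\cong\omega_R$ followed by a freeness criterion; your Fitting-ideal computation is essentially the content of \cite[Lemma 2.1(a)]{Rah09}, which the paper cites. The complete case goes through Matlis duality and Corollary \ref{3.2a}.

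\textbf{The gap.} Your justification of (i)$\Rightarrow$(ii) rests on false claims.
\begin{itemize}
\item Lemma \ref{2.4} says that $\id_R({}^{\phi^e}\!R)<\infty$, and likewise flatness of ${}^{\phi^e}\!R$, each hold \emph{only} when $R$ is regular. So for a Gorenstein non-regular $R$ both claims fail; for instance $k[[x,y]]/(xy)$ with $k$ perfect of characteristic $p$ and $\phi$ the Frobenius. ``$R$ is regular here'' is not a hypothesis you have.
\item Your auxiliary claim that a finite module of finite injective dimension over a Gorenstein ring is maximal Cohen--Macaulay is also false. The residue field of a regular local ring of positive dimension is a counterexample.
\end{itemize}

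\textbf{The repair (this is what the paper does).}
\begin{itemize}
\item ${}^{\phi^e}\!R$ is maximal Cohen--Macaulay because $\depth_R({}^{\phi^e}\!R)=\depth R$, since depth is unchanged along a finite local map \cite[Exercise 1.2.26]{BH98}.
\item Ischebeck's lemma (equivalently \cite[Theorem 4.15]{PS73}) then gives $\Ext^j_R({}^{\phi^e}\!R,X)=0$ for all $j>0$ and every finite $X$ with $\id_R X<\infty$.
\item No exactness of $\F^{\phi^e}$ is needed. Apply both functors to a free presentation $F_1\to F_0\to N\to 0$. The functor $\F^{\phi^e}$ keeps it right exact automatically.
\item Over Gorenstein $R$, the first two syzygies of $N$ have finite projective dimension, hence finite injective dimension. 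The Ext-vanishing therefore makes $\widetilde{\F}^{\phi^e}(F_1)\to\widetilde{\F}^{\phi^e}(F_0)\to\widetilde{\F}^{\phi^e}(N)\to 0$ exact as well.
\item Compare cokernels via the natural map, which is an isomorphism on free modules because $\Hom_R({}^{\phi^e}\!R,R)\cong R$.
\end{itemize}

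\textbf{On (iii)$\Rightarrow$(i).} The step you flag as the main obstacle is not one. $\mathrm{Fitt}_{r-1}(\omega_R)=I_1(A)$ is the ideal generated by the entries of the minimal presentation matrix. Once you know it lies in $\bigcap_t\fm^{2^t}=0$, you get $A=0$, so $\omega_R\cong R^r$ is free, and $R$ is Gorenstein.

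The rank argument you sketch would in fact be wrong. One always has $\mathrm{Fitt}_0(\omega_R)\subseteq\Ann_R(\omega_R)=0$. Moreover, $\omega_R$ has a rank only when $R$ is generically Gorenstein.

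\textbf{A minor point in the complete case.} For finite $N$ with $\id_R N<\infty$, $A=\D(N)$ only satisfies $\fd_R(A)<\infty$. Passing to $\pd_R(A)<\infty$ requires Jensen and Raynaud--Gruson, as the paper notes.
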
	
	
\begin{proof} (i)$\Rightarrow$(ii) By \cite[Exercise 1.2.26]{BH98}, we have $\depth_{R}({^{{\phi}^e}}\hspace{-0.5mm}R)
=\depth R$. Since $R$ is Gorenstein, its injective dimension is finite. In particular, $R$ is itself a dualizing module
for $R$. By \cite[Theorem 4.15]{PS73}, it follows that $\Ext_R^j({^{{\phi}^e}}\hspace{-0.5mm}R,N)=0$ for every finitely
generated $R$-module with $\id_R(N)<\infty$ and all $e>0$ and $j>0$. Now, the claim follows by an argument analogous to
that used in the proof of \cite[Korollar 5.5]{HE74}.

(ii)$\Rightarrow$(iii) is clear.
	
(iii)$\Rightarrow$(i)  Since the injective dimension of $\omega_R$ is finite, $\widetilde{\F}^{\phi}(\omega_R)\cong
\F^{\phi}(\omega_R)$. Consequently, $\F^{\phi}(\omega_R)\cong \omega_R$ by \cite[Satz 5.12]{HK71}. This implies that
$\omega_R$ is free by \cite[Lemma 2.1(a)]{Rah09}, and therefore $R$ is Gorenstein.

Now, assume that $R$ is complete.

(ii)$\Rightarrow$(iv) Consider an Artinian $R$-module $A$ with $\pd_R(A)<\infty$. Let $$N=\D(A)(=\Hom_R(A,\E_R(\Bbbk))).$$
Then $N$ is a finitely generated $R$-module with $\id_R(N)<\infty$. Hence, we have an isomorphism $\widetilde{\F}^
{\phi^{e}}(N)\cong \F^{\phi^{e}}(N)$. Applying the Matlis duality functor $\D(-)$ to this isomorphism and invoking Corollary
\ref{3.2a}, we obtain $\widetilde{\F}^{\phi^{e}}(\D(N))\cong \F^{\phi^{e}}(\D(N))$. This completes the proof, as Matlis
duality yields $\D(N)\cong A$.

(iv)$\Rightarrow$(v) is clear.

(v)$\Rightarrow$(iii) The proof of this implication is similar to the proof of (ii)$\Rightarrow$(iv). Note that as $R$
is local, every module of finite flat dimension has finite projective dimension by \cite[Proposition 6]{J} and
\cite[Th\'{e}or\`{e}me 3.2.6]{RG}.
\end{proof}

We conclude this section by presenting our second characterization of regular rings, which extends \cite[Korollar 5.6]{HE74}.

\begin{theorem}\label{4.4a} Let $(R,\fm,\Bbbk)$ be a local ring with a contracting endomorphism
$\phi: R\rightarrow R$. Assume that $R$ is $\F^{\phi}\!$-finite. The following statements are equivalent:
\begin{itemize}
\item[(i)] $R$ is regular.
\item[(ii)] For every finitely generated $R$-module $N$ and for all $e>0$, there exists a functorial
isomorphism $\widetilde{\F}^{\phi^{e}}(N)\cong \F^{\phi^{e}}(N)$.
\item[(iii)] For every finitely generated $R$-module $N$, there exists a functorial isomorphism
$\widetilde{\F}^{\phi}(N)\cong \F^{\phi}(N)$.
\end{itemize}	
Moreover, if $R$ is complete, then the above conditions are equivalent to the following two conditions:
\begin{itemize}
\item[(iv)] For every Artinian $R$-module $A$ and for all $e>0$, there exists a functorial isomorphism
$\widetilde{\F}^{\phi^{e}}(A)\cong \F^{\phi^{e}}(A)$.
\item[(v)] For every Artinian $R$-module $A$, there exists a functorial isomorphism $\widetilde{\F}^{\phi}(A)
\cong \F^{\phi}(A)$.
\end{itemize}	
\end{theorem}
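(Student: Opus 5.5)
The plan is to model the argument on Lemma \ref{4.3a}, with regularity replacing the Gorenstein property.

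\emph{(i)$\Rightarrow$(ii).} If $R$ is regular, then $R$ is Gorenstein and in particular Cohen--Macaulay. Every finitely generated $R$-module $N$ has finite injective dimension, since $\id_R(N)\le \dim R$ for regular local rings. Moreover $R$ is its own dualizing module. The hypotheses of Lemma \ref{4.3a}(ii) therefore hold, and it gives the functorial isomorphisms $\widetilde{\F}^{\phi^e}(N)\cong \F^{\phi^e}(N)$ for all finitely generated $N$ and all $e>0$. One could also argue directly: by Lemma \ref{2.4}, ${}^{\phi^e}R$ is flat. Since it is finitely generated (Lemma \ref{2.3}(xi)), it is in fact free of finite rank. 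Its depth equals $\depth R$, so the Ext-vanishing used in Lemma \ref{4.3a} applies.

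\emph{(ii)$\Rightarrow$(iii)} is trivial.

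\emph{(iii)$\Rightarrow$(i).} This is the main step. I would apply (iii) to $N=\Bbbk$ (or, more generally, to $R/\fm^t$). On one side, Lemma \ref{2.3}(iv) gives $\F^{\phi}(\Bbbk)\cong R/\phi(\fm)R$. On the other side, $\widetilde{\F}^{\phi}(\Bbbk)=\Hom_R({}^{\phi}R,\Bbbk)$ is a $\Bbbk$-vector space of dimension $\mu_R({}^{\phi}R)$, the minimal number of generators. As right $R$-modules, both sides are annihilated by $\fm$ only if $\phi(\fm)R=\fm$, which would contradict the contracting hypothesis unless $\fm=0$.

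This simple test fails to capture regularity, so a better route is needed. The idea is to use functoriality on a short exact sequence $0\to K\to F\to N\to 0$ with $F$ free. Since $\F^{\phi}$ is right exact and $\widetilde{\F}^{\phi}$ is left exact, a natural isomorphism between them makes both functors exact on finitely generated modules:
\begin{itemize}
\item $\F^{\phi}$ preserves the injection $K\to F$, because $\widetilde{\F}^{\phi}$ does;
\item $\widetilde{\F}^{\phi}$ preserves the surjection $F\to N$, because $\F^{\phi}$ does.
\end{itemize}
Thus $\F^{\phi}(-)=-\otimes_R{}^{\phi}R$ is exact on finitely generated modules, so $\Tor_1^R(N,{}^{\phi}R)=0$ for every finitely generated $N$. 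Hence ${}^{\phi}R$ is flat, and Lemma \ref{2.4} gives that $R$ is regular. This exactness argument is the heart of the proof, and naturality of the isomorphism is exactly what makes it work.

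\emph{The complete case.} Assume now that $R$ is complete.

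For (ii)$\Rightarrow$(iv), let $A$ be Artinian and set $N=\D(A)$, which is finitely generated by Matlis duality. Applying $\D$ to $\widetilde{\F}^{\phi^e}(N)\cong\F^{\phi^e}(N)$ and using both parts of Corollary \ref{3.2a} gives $\F^{\phi^e}(A)\cong\widetilde{\F}^{\phi^e}(A)$, since $\D(N)\cong A$. This mirrors the corresponding step in Lemma \ref{4.3a}.

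(iv)$\Rightarrow$(v) is trivial.

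For (v)$\Rightarrow$(iii), dualize in the same way: for finitely generated $N$, take $A=\D(N)$, which is Artinian. Alternatively, one can bypass this and argue as in (iii)$\Rightarrow$(i). A natural isomorphism on Artinian modules makes $\widetilde{\F}^{\phi}$ exact on $\mathbb{A}(R)$ by the same left/right exactness argument. Theorem \ref{4.1a} then gives regularity, and Lemma \ref{2.3}(x) supplies the Artinian condition on $R/\phi(\fm)R$.

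The main obstacle I anticipate is making sure the isomorphisms in Corollary \ref{3.2a} are natural, so that Matlis duality transports functorial isomorphisms; everything else is formal.
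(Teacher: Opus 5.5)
Your proof is correct and is essentially the paper's. You get (i)$\Rightarrow$(ii) from Lemma~\ref{4.3a}; you get (iii)$\Rightarrow$(i) because a natural isomorphism between the right exact $\F^{\phi}$ and the left exact $\Hom_R({}^{\phi}R,-)$ forces $\Tor_1^R(N,{}^{\phi}R)=0$, so Lemma~\ref{2.4} applies; and you handle the complete case by transporting the isomorphisms through Matlis duality with Corollary~\ref{3.2a} (your alternative (v)$\Rightarrow$(i) via Theorem~\ref{4.1a} also works and does not need completeness).

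Delete the discarded $N=\Bbbk$ paragraph, though. In its right module structure, $\Hom_R({}^{\phi}R,\Bbbk)$ is the $\Bbbk$-dual of $R/\phi(\fm)R$ and is not annihilated by $\fm$. As written, that paragraph would wrongly rule out (iii) for every regular local ring of positive dimension.
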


\begin{proof} (i)$\Rightarrow$(ii) This follows by the implication (i)$\Rightarrow$(ii) in Lemma \ref{4.3a}.
	
(ii)$\Rightarrow$(iii) This implication is obvious.

(iii)$\Rightarrow$(i) Since the right exact functor $\F^{\phi}$ and the left exact functor $\widetilde{\F}^{\phi}$
are naturally isomorphic on the category of finitely generated $R$-modules, the functor $\F^{\phi}$ must be exact
on this category. This yields that $\Tor_1^R (N,{^{\phi}}R)=0$ for every finitely generated $R$-module $N$, and so
$^{\phi}R$ is a flat $R$-module. Therefore, by Lemma \ref{2.4}, R is regular.

Next, suppose that $R$ is complete. Then by applying the proof of the implications (ii)$\Rightarrow$(iv),
(iv)$\Rightarrow$(v) and (v)$\Rightarrow$(iii) in Lemma \ref{4.3a}, we can see that all five conditions are equivalent.
\end{proof}
		
\section{Local cohomology and the functor $\F^{\phi}$}

In this section, we examine several situations in which the functor $\F^{\phi}$ commutes with local cohomology;
see Lemmas \ref{5.2a}, \ref{3.5a}, and \ref{5.3a}. We also investigate conditions under which the Artinianness
of the local cohomology modules $\H^i_{\fa}(R)$ implies their injectivity; see Theorem \ref{5.4a} and Corollary
\ref{5.5a}. These results extend those of \cite{HS93}, which concern Frobenius endomorphisms, to the more general
setting of locally contracting endomorphisms.

For an $R$-module $M$, recall that a prime ideal $\fp$ of $R$ is called a {\it coassociated prime ideal} of $M$ if
$\fp=\Ann_R(L)$ for some Artinian quotient $L$ of $M$. We denote the set of coassociated prime ideals of $M$ by
$\Coass_R(M)$.

\begin{lemma}\label{5.1aa} Let $\phi:R\to R$ be a ring endomorphism, and $M$ an $R$-module. Assume that $R$ is
$\F^{\phi}\!$-finite. Considering $\widetilde{\F}^{\phi}(M)$ and $\F^{\phi}(M)$ with their natural left $R$-module
structures, we have
\begin{itemize}
\item[(i)]  $\Ass_R(\widetilde{\F}^{\phi}(M))=\Ass_R(M)\cap \Supp_R(^{\phi}R)$.
\item[(ii)] $\Coass_R(\F^{\phi}(M))=\Coass_R(M)\cap \Supp_R(^{\phi}R)$.
\end{itemize}	
\end{lemma}

\begin{proof} By \cite[Exercise 1.2.27]{BH98} and \cite[Theorem 1.21]{Y}, respectively, for any finitely
generated $R$-module $N$, we have $$\Ass_R(\Hom_R(N,M))=\Ass_R(M)\cap \Supp_R(N)$$ and $$\Coass_R(M\otimes_RN)=
\Coass_R(M)\cap \Supp_R(N).$$ The claims follow by taking $N ={}^{\phi}\!R$, which is a finitely generated $R$-module
by the $\F^{\phi}\!$-finiteness assumption.
\end{proof}

Later, in Section 5, we will study the behavior of associated primes under the functor $\F^{\phi}_R(-)$ and of
coassociated primes under the functor $\widetilde{\F}^{\phi}_R(-)$; see Corollaries \ref{3.7a} and \ref{3.8a}.

\begin{lemma}\label{5.1ab} Let $\phi:R\to R$ be a locally contracting endomorphism and $M$ an $R$-module. Assume
that $R$ is $\F^{\phi}\!$-finite. Then $\Supp_R(\F^{\phi}(M))=\Supp_R(M)$. In particular, if $A$ is an Artinian
$R$-module, then the $R$-module $\F^{\phi}(A)$ is also Artinain and has the same support as $A$.
\end{lemma}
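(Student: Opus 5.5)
The plan is to localize, reduce to the local case, and there detect non-vanishing of $\F^{\phi}$ by passing to Matlis duals, where the functor becomes a $\Hom$ and Lemma \ref{5.1aa}(i) controls associated primes.

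\textbf{Reduction to the local case.} First, for every prime $\fp$ one checks $\phi^{-1}(\fp)=\fp$. Clearly $\fp\subseteq \phi^{-1}(\fp)$ by Lemma \ref{2.3}(vi). Conversely, $\fq=\phi^{-1}(\fp)$ is a prime with $\phi(\fq)R\subseteq\fp$, so $\fq=\Rad(\phi(\fq)R)\subseteq \fp$. Hence $\phi(R\setminus\fp)\subseteq R\setminus \fp$, and $\phi_{\fp}$ is a well-defined local endomorphism of $R_{\fp}$, contracting by hypothesis. Lemma \ref{2.3}(v), with $T=R_{\fp}$ and $\psi=\phi_{\fp}$, gives $\F^{\phi}(M)_{\fp}\cong \F^{\phi_{\fp}}_{R_{\fp}}(M_{\fp})$. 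The ring $R_{\fp}$ is $\F^{\phi_{\fp}}$-finite, since ${}^{\phi_{\fp}}R_{\fp}$ is a localization of ${}^{\phi}R$. So $M_{\fp}=0$ forces $\F^{\phi}(M)_{\fp}=0$, giving $\Supp_R(\F^{\phi}(M))\subseteq\Supp_R(M)$. For the reverse inclusion it suffices to show: if $(R,\fm)$ is local, $\phi$ is contracting, $R$ is $\F^{\phi}$-finite and $\Rad(\phi(\fm)R)=\fm$, then $M\neq 0$ implies $\F^{\phi}(M)\neq 0$.

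\textbf{Main obstacle: non-vanishing in the local case.} The naive idea fails: $\F^{\phi}(M)$ surjects onto $M\otimes_R {}^{\phi}R/\fm\,{}^{\phi}R$, but this can vanish (e.g.\ for $M=\E_R(\Bbbk)$ or a fraction field). So I would pass to Matlis duals instead. By Corollary \ref{3.2a}(ii), $\D(\F^{\phi}(M))\cong \widetilde{\F}^{\phi}(\D(M))=\Hom_R({}^{\phi}R,\D(M))$. Since $\E_R(\Bbbk)$ is an injective cogenerator, $\D(M)\neq 0$, and over the Noetherian ring $R$ it has an associated prime $\fq$. By Lemma \ref{5.1aa}(i),
\[
\Ass_R(\widetilde{\F}^{\phi}(\D(M)))=\Ass_R(\D(M))\cap\Supp_R({}^{\phi}R),
\]
so it is enough to show $\Supp_R({}^{\phi}R)=\Spec R$. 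For a prime $\fq$, $({}^{\phi}R)_{\fq}$ is the localization of $R$ at the multiplicative set $\phi(R\setminus\fq)$. This set avoids $0$, because $\phi(s)=0$ forces $s\in\phi^{-1}(\fq)=\fq$. Hence $({}^{\phi}R)_{\fq}\ne 0$. It follows that $\D(\F^{\phi}(M))\neq0$, so $\F^{\phi}(M)\neq0$, and the local claim is proved.

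\textbf{Artinian case.} Let $A$ be Artinian. Then $\Supp_R(\F^{\phi}(A))=\Supp_R(A)$ is a finite set of maximal ideals, and $A$ decomposes as a finite direct sum of its $\fm$-torsion parts. As $\F^{\phi}$ is additive, we may assume $A$ is $\fm$-torsion for a single maximal $\fm$, and hence regard it as a module over $R_{\fm}$ or $\widehat{R_{\fm}}$. Using Lemma \ref{2.3}(viii) to pass to the completion, Corollary \ref{3.2a}(ii) gives $\D(\F^{\phi}(A))\cong\Hom({}^{\phi}R,\D(A))$. Here $\D(A)$ is finitely generated over the completion and ${}^{\phi}R$ is finitely generated, so this $\Hom$ is finitely generated. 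Since $\F^{\phi}(A)$ is $\fm$-torsion by the support equality, Matlis duality then shows $\F^{\phi}(A)$ is Artinian. The step I expect to need the most care is the bookkeeping of left versus right module structures when invoking Corollary \ref{3.2a} and Lemma \ref{2.3}(v) after localizing or completing.
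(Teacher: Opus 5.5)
Your proof is correct once one small misstatement is fixed (see below). The key non-vanishing step, however, uses a dual mechanism to the paper's.

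\textbf{Comparison with the paper.} After the same localization, the paper stays on the tensor side. It uses that $X\neq 0$ iff $\Coass_R(X)\neq\emptyset$, together with Yassemi's formula $\Coass_R(M\otimes_R N)=\Coass_R(M)\cap\Supp_R(N)$ for finitely generated $N$ (Lemma \ref{5.1aa}(ii)). It then imports $\Supp_R({}^{\phi}R)=\Spec R$ from the proof of \cite[Theorem 3.10]{EY12}. You instead pass to $\Hom_R({}^{\phi}R,\D(M))$ and use the $\Ass$-of-$\Hom$ formula (Lemma \ref{5.1aa}(i)). Since $\Coass_R(X)=\Ass_R(\D(X))$ over a local ring (the identity used in Corollary \ref{3.8a}), this is the paper's argument seen through Matlis duality.

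What your version buys is that it needs only standard associated-prime theory rather than coassociated primes. Your check of $\Supp_R({}^{\phi}R)=\Spec R$ via $\ker\phi\subseteq\phi^{-1}(\fq)=\fq$ is also self-contained where the paper only cites. You could even drop Corollary \ref{3.2a}. Only vanishing is at stake, so the plain adjunction $\Hom_R(M\otimes_R{}^{\phi}R,\E)\cong\Hom_R({}^{\phi}R,\Hom_R(M,\E))$, for any injective cogenerator $\E$, suffices. This also removes any left/right-structure worry in that step.

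\textbf{The repair.} Your reduced local claim, as stated, is false. You list only: local, contracting, $\F^{\phi}$-finite, and $\Rad(\phi(\fm)R)=\fm$.
\begin{itemize}
\item Take $R=\Bbbk[[x,y]]/(xy)$ and $\phi(f(x,y))=f(x^2+y^2,0)$. All of your listed hypotheses hold; for $\F^{\phi}$-finiteness, note $R$ is finite over $\Bbbk[[x^2+y^2]]$.
\item Let $M$ be the fraction field of $R/xR$. Then $y$ acts invertibly on $M$ while $\phi(y)=0$, so $\F^{\phi}(M)=0$.
\item The failure is that $\phi^{-1}(xR)=yR\neq xR$.
\end{itemize}
Your local argument really uses $\phi^{-1}(\fq)=\fq$ for every prime of the local ring. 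This does hold for $\phi_{\fp}$ on $R_{\fp}$, because $\phi_{\fp}$ is again locally contracting. So keep that hypothesis in the reduction, as the paper does.

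\textbf{The Artinian part.} It needs none of the completion or Matlis-duality bookkeeping you anticipated. A surjection $R^n\to{}^{\phi}R$ gives a surjection $A^n\to\F^{\phi}(A)$, so $\F^{\phi}(A)$ is Artinian at once. Submodules for the right structure are also submodules for the left one, so this holds for either structure. This is the paper's one-line argument.
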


\begin{proof} Since $^{\phi}R$ is a finitely generated $R$-module, $\F^{\phi}(A)$ is Artinian for every Artinian
$R$-module $A$. It therefore remains to prove the first assertion.

It is clear that $\Supp_R(\F^{\phi}(M))\subseteq \Supp_R(M)$. For the converse inclusion, pick $\fp \in \Supp_R(M)$.
By Lemma \ref{2.3}(v), we have $\F_R^{\phi}(M)\otimes_RR_{\fp}\cong \F_{R_{\fp}}^{\phi_{\fp}}(M_{\fp})$, so it suffices
to show that $\F_{R_{\fp}}^{\phi_{\fp}}(M_{\fp})\neq 0$. Hence, we may assume that $(R,\fm,\Bbbk)$ is a local ring
admitting a locally contracting endomorphism $\phi:R \to R$ such that $R$ is $\F^{\phi}\!$-finite, and it remains to
show that $\F^{\phi}(M)\neq 0$ for every nonzero $R$-module $M$.

It is well known (and straightforward to verify) that an $R$-module $X$ is nonzero if and only if $\Coass_R(X)\neq
\emptyset$. On the other hand, by the proof of Theorem 3.10 in \cite{EY12}, we have $\Supp_R(^{\phi}R)=\Spec R$.
Therefore, Lemma \ref{5.1aa}(ii) yields that $$\Coass_R(\F^{\phi}(M))=\Coass_R(M)\neq \emptyset,$$ and hence
$\F^{\phi}(M)\neq 0$, which completes the argument.
\end{proof}

The following result extends \cite[Corollary 3.6]{HS93}.

\begin{lemma}\label{5.1a}  Let $R$ be a regular ring with a locally contracting endomorphism $\phi:R\rightarrow R$,
and $A$ an Artinian $R$-module. Assume that $R$ is $\F^{\phi}\!$-finite and $\F^{\phi}(A)\cong A$. Then $A$ is an
injective $R$-module.
\end{lemma}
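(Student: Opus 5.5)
The plan is to reduce to the complete local case, pass to Matlis duals so that $A$ becomes a finitely generated module $N$ with $\F^{\phi}(N)\cong N$, and then show $N$ is free by a minimal-resolution argument.

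\emph{Reduction to the local case.} An Artinian $R$-module decomposes as $A=\bigoplus_{\fm}\Gamma_{\fm}(A)$, a finite direct sum over maximal ideals $\fm$ with $\Gamma_{\fm}(A)\cong A_{\fm}$. Injectivity of $A$ is equivalent to injectivity of each $A_{\fm}$ over $R_{\fm}$. By Lemma \ref{2.3}(v), $\F^{\phi}_R(A)\otimes_R R_{\fm}\cong \F^{\phi_{\fm}}_{R_{\fm}}(A_{\fm})$, so the hypothesis $\F^{\phi}(A)\cong A$ localizes.

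The other hypotheses also pass to $R_{\fm}$:
\begin{itemize}
\item $R_{\fm}$ is regular.
\item $\phi_{\fm}$ is contracting by Lemma \ref{2.3}(vii).
\item $\F^{\phi}$-finiteness localizes, since $\phi(\fm)R$ is $\fm$-primary by Lemma \ref{2.3}(vi).
\end{itemize}
Next pass to $\widehat{R}$. An Artinian $R$-module supported at $\fm$ is naturally an Artinian $\widehat{R}$-module with the same submodule lattice, and it is $R$-injective iff it is $\widehat{R}$-injective. Lemma \ref{2.3}(viii) identifies $\widehat{R}\otimes_R{}^{\phi}\!R$ with ${}^{\widehat{\phi}}\widehat{R}$, so $\F^{\widehat\phi}_{\widehat R}(A)\cong \F^{\phi}_R(A)\cong A$. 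Hence we may assume $(R,\fm,\Bbbk)$ is a complete regular local ring, $\phi$ is contracting, and $R$ is $\F^{\phi}$-finite.

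\emph{Matlis duality.} Put $N=\D(A)$, a finitely generated $R$-module with $\D(N)\cong A$. By Corollary \ref{3.2a}(ii), $N\cong \D(\F^{\phi}(A))\cong \widetilde{\F}^{\phi}(\D(A))=\widetilde{\F}^{\phi}(N)$. Since $R$ is regular, Theorem \ref{4.4a} gives $\widetilde{\F}^{\phi}(N)\cong\F^{\phi}(N)$, so $\F^{\phi}(N)\cong N$. Iterating, $\F^{\phi^{e}}(N)\cong N$ for all $e>0$. It now suffices to show that $N$ is free, for then $A\cong\D(N)$ is a finite direct sum of copies of $\E_R(\Bbbk)$ and is injective.

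\emph{Freeness of $N$ (the main step).} Take a minimal free resolution $0\to R^{b_d}\xrightarrow{\partial_d}\cdots\to R^{b_0}\to N\to 0$, which is finite because $R$ is regular. Suppose $d=\pd_R N>0$. By Lemma \ref{2.4}, ${}^{\phi^e}\!R$ is flat. Hence applying $\F^{\phi^e}$ gives an exact complex which is a free resolution of $\F^{\phi^e}(N)\cong N$. By Lemma \ref{2.3}(iii) its differentials are the matrices $(\phi^e(a_{ij}))$, and by Lemma \ref{2.3}(ix) they still have entries in $\fm$, so this resolution is again minimal.

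Minimal free resolutions of $N$ are unique up to isomorphism of complexes. The ideal $I_1(\partial_d)$ generated by the entries of the last differential is invariant under change of bases, so $I_1(\partial_d)=\phi^e(I_1(\partial_d))R$ for every $e$. Choose $i$ with $\phi^i(\fm)\subseteq\fm^2$; by induction $\phi^{ki}(\fm)R\subseteq\fm^{2^k}$. It follows that $I_1(\partial_d)\subseteq\bigcap_k\fm^{2^k}=0$ by Krull's intersection theorem. Then $\partial_d=0$, contradicting injectivity of $\partial_d$ with $b_d\neq 0$. Hence $d=0$ and $N$ is free.

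I expect the delicate points to be the bookkeeping in the reduction to the complete local case, and justifying that the entry ideal of the last differential is an invariant of the minimal resolution.
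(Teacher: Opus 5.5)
Your proposal is correct and follows the paper's proof step for step. It uses the same reduction to a complete regular local ring with $\phi$ contracting and $R$ $\F^{\phi}$-finite, and the same use of Corollary~\ref{3.2a}(ii) and Theorem~\ref{4.4a} to get $\F^{\phi}(\D(A))\cong \D(A)$.

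The one real difference is the last step. The paper cites Rahmati's Lemma~2.1(a) to conclude that $\D(A)$ is free. You prove this directly, using uniqueness of minimal free resolutions, flatness of ${}^{\phi^e}\!R$, and Krull's intersection theorem, and that argument is valid. If you ran it instead with the Fitting ideal $I_1(\partial_1)$ of a minimal presentation and right exactness of $\F^{\phi^e}$, it would not even need regularity.
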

	
\begin{proof} Assume that $\Supp_RA=\{\fm_1,\dots, \fm_t\}$. By applying \cite[Exercise 8.49(iii)]{S}, it is
straightforward  to verify that $A\cong \bigoplus \limits_{i=1}^t A_{\fm_i}$. For each $1\leq i\leq t$, it is
easy to see that the isomorphism $\F^{\phi}(A)\cong A$ induces an isomorphism $\F^{{\phi}_{\fm_i}}(A_{\fm_i})
\cong A_{\fm_i}$, and that every injective $R_{\fm_i}$-module is also injective as an $R$-module. Thus by localizing
at a maximal ideal, we can assume that $(R,\fm,\Bbbk)$ is a regular local with a locally contracting endomorphism
$\phi:R\rightarrow R$ such that $R$ is $\F^{\phi}\!$-finite.

The local ring $\widehat{R}$ is regular. By Lemma \ref{2.3}(vii), $\widehat{\phi}: \widehat{R}\rightarrow
\widehat{R}$ is contracting, and  by Lemma \ref{2.3}(viii), $\widehat{R}$ is $\F^{\widehat{\phi}}$-finite.  
It is known that any Artinian $R$-module $X$ naturally possesses the structure of an Artinian
$\widehat{R}$-module, and the induced natural map $X\otimes_R\widehat{R}\to X$ is an $\widehat{R}$-isomorphism.
Hence, $A\otimes_R\widehat{R}$ is an Artinian $\widehat{R}$-module and, by Lemma \ref{2.3}(v), we have
$$\F _{\widehat{R}}^{\widehat{\phi}}(A\otimes_R \widehat{R})\cong \F^{\phi}(A)\otimes_R\widehat{R}\cong A\otimes_R
\widehat{R}.$$ By Baer's criterion
for injective modules, a module $X$ over a ring $T$ is injective if and only if $\Ext_T^1(T/\fa,X)=0$ for every
ideal $\fa$ of $T$. On the other hand, for any ideal $\fa$ of $R$, there is a natural $\widehat{R}$-isomorphism $$\Ext_R^1(R/\fa,A)\otimes_R\widehat{R}\cong \Ext_{\widehat{R}}^1(\widehat{R}/\fa\widehat{R},A\otimes_R\widehat{R}),$$
and the functor $(-)\otimes_R\widehat{R}$ is faithful. Hence, if the $\widehat{R}$-module $A\otimes_R\widehat{R}$
is injective, then the $R$-module $A$ is also injective. Thus, we may and do assume that $R$ is complete and $\phi$
is contracting.
	
By Corollary \ref{3.2a}(ii), we have $\D (\F^{\phi}(A))\cong \widetilde{\F}^{\phi}(\D(A))$. Since $R$ is
regular and $\D(A)$ is a finitely generated $R$-module, Theorem \ref{4.4a} yields that
$\widetilde{\F}^{\phi}(\D(A))\cong \F^{\phi} (\D(A))$. Thus $\D(A)\cong \F^{\phi}(\D(A))$, and so $\D(A)$
is free by \cite[Lemma 2.1(a)]{Rah09}. Hence, $A$ is injective.
\end{proof}

\begin{example}\label{5.1abc} Let $\phi:R\to R$ be a locally contracting endomorphism such that $R$ is
$\F^{\phi}\!$-finite, and let $A$ be an Artinian $R$-module. By Lemma \ref{5.1ab}, we know that the $R$-module
$\F^{\phi}(A)$ is Artinian and that $\Supp_R(\F^{\phi}(A))=\Supp_R(A)$. One might guess that $\F^{\phi}(A)$
and $A$ are even isomorphic. However, this is not the case. Indeed, assume that $R$ is regular and that $\fm$
is a maximal ideal of $R$, and let $A$ be any proper nonzero submodule of the Artinian $R$-module $\E_R(R/\fm)$.
Then $A$ is not injective, and hence $\F^{\phi}(A)\ncong A$ by Lemma \ref{5.1a}.
\end{example}
		
The next result generalizes \cite[Proposition 2.1(e)]{MA14} to the context of locally contracting endomorphisms.

\begin{lemma}\label{5.2a} Let $(R,\fm,\Bbbk)$ be a local ring, $\phi:R\to R$ a ring endomorphism, and $N$ a finitely
generated $R$-module of dimension $d$. Assume that either $\phi$ is locally contracting or $R$ is $\F^{\phi}\!$-finite.
Then $\F^{\phi}(\H_{\fm}^d(N))\cong \H_{\fm}^d(\F^{\phi}(N))$.	
\end{lemma}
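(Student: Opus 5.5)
The plan is to use the \v{C}ech complex description of local cohomology together with right exactness of $\F^{\phi}$. The key input is that $\phi(\fm)R$ is $\fm$-primary. If $\phi$ is locally contracting, this is Lemma \ref{2.3}(vi). If $R$ is $\F^{\phi}$-finite, it follows from Lemma \ref{2.3}(x), because $\fm$ is the only maximal ideal of the local ring $R$.

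First I reduce the top local cohomology to a cokernel. Let $x_1,\dots,x_t$ generate $\fm$. Over any Noetherian ring, $\H^i_{\fb}(X)=0$ for $i>\dim X$, and $\H^d_{\fm}(N)$ is the top cohomology of the \v{C}ech complex on $x_1,\dots,x_t$ only when $d=t$. To avoid this, I choose a system of parameters $y_1,\dots,y_d$ for $N$, i.e.\ elements of $\fm$ with $N/(y_1,\dots,y_d)N$ of finite length. Then $\H^i_{\fm}(N)=\H^i_{\fb}(N)$ with $\fb=(y_1,\dots,y_d)$, since $\Rad(\fb+\Ann N)=\fm$. The \v{C}ech complex on $y_1,\dots,y_d$ has length $d$, so
$$\H^d_{\fm}(N)\cong \coker\Bigl(\bigoplus_{j} N_{y_1\cdots\widehat{y_j}\cdots y_d}\to N_{y_1\cdots y_d}\Bigr).$$

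Next I apply $\F^{\phi}$. Since $\F^{\phi}$ is right exact and commutes with direct limits, and $N_y=\varinjlim(N\xrightarrow{y}N\xrightarrow{y}\cdots)$, Lemma \ref{2.3}(ii) gives $\F^{\phi}(N_y)\cong \F^{\phi}(N)_{\phi(y)}$, where $\F^{\phi}(N)$ carries its right structure. The \v{C}ech differentials are sums of signed localization maps, so they are carried to the corresponding maps. Hence $\F^{\phi}(\H^d_{\fm}(N))$ is the top \v{C}ech cohomology of $\F^{\phi}(N)$ with respect to $\phi(y_1),\dots,\phi(y_d)$, that is, $\H^d_{\fc}(\F^{\phi}(N))$ with $\fc=\phi(\fb)R$.

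The remaining, and main, step is to identify $\H^d_{\fc}(\F^{\phi}(N))$ with $\H^d_{\fm}(\F^{\phi}(N))$. It suffices to show $\Rad(\fc+\Ann\F^{\phi}(N))=\fm$. By Lemma \ref{2.3}(iv),
$$\F^{\phi}(N)/\fc\,\F^{\phi}(N)\cong\F^{\phi}(N/\fb N).$$
The module $N/\fb N$ is killed by some $\fm^k$, so $\F^{\phi}(N/\fb N)$ is killed by $(\phi(\fm)R)^k$, which is $\fm$-primary. Therefore $\F^{\phi}(N/\fb N)$ has finite length, and $\fc+\Ann\F^{\phi}(N)$ is $\fm$-primary, as required. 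This gives
$$\F^{\phi}(\H^d_{\fm}(N))\cong \H^d_{\fm}(\F^{\phi}(N)).$$

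This last step is where the hypothesis ``locally contracting or $\F^{\phi}$-finite'' is used, and I expect it to be the main obstacle. The point is that $\dim\F^{\phi}(N)\le d$ may be strict, but the statement still holds because the cokernel description is independent of that.
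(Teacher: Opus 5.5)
Your proposal is correct and is essentially the paper's proof. Both arguments reduce $\H^d_{\fm}(N)$ to $\H^d_{\fb}(N)$, where $\fb$ is generated by a system of parameters of $N$. Both then use right exactness of top local cohomology to reach $\H^d_{\phi(\fb)R}(\F^{\phi}(N))$: you do this through the \v{C}ech cokernel, the paper through $\H^d_{\fb}(N)\otimes_R{}^{\phi}\!R\cong\H^d_{\fb}(N\otimes_R{}^{\phi}\!R)$ together with the Independence Theorem. Finally, both use that $\phi(\fm)R$ is $\fm$-primary to replace $\phi(\fb)R$ by $\fm$ modulo $\Ann_R\F^{\phi}(N)$. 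Your finite-length argument for $\F^{\phi}(N/\fb N)$ is just a repackaging of the paper's observation that $\phi(\Ann_R N)R\subseteq\Ann_R\F^{\phi}(N)$ and that $\phi(\Ann_R N+\fb)R$ is $\fm$-primary.
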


\begin{proof} Let $\fa=\Ann_R(N)$, and let $x_1,\dots, x_d\in \fm$ be such that their images in $R/\fa$ form a system
of parameters. Set $\fb=\langle x_1, \dots, x_d \rangle$. Then the ideal $\fa+\fb$ is $\fm$-primary,  and hence
$$\H_{\fm}^d(N)\cong \H_{(\fa+\fb)}^d(N)\cong \H_{\fb}^d(N).$$ Since $\H_{\fb}^i(-)=0$ for all $i>d$, the functor
$\H_{\fb}^d(-)$ is right exact. Consequently, we obtain the following display of isomorphisms:
$$
\begin{array}{rlllllllllll}
\F^{\phi}(\H_{\fm}^d(N)) &\cong  \F^{\phi}(\H_{\fb}^d(N)) \\
& \cong \H_{\fb}^d(N)\otimes_R {}^{\phi}\!R\\
& \cong \H_{\fb}^d(N\otimes_R {}^{\phi}\!R)\\
& \cong \H_{\fb}^d(\F^{\phi}(N))\\
& \cong \H_{\phi(\fb)R}^d(\F^{\phi}(N)).
\end{array}
$$
The last isomorphism holds by the Independence theorem \cite[Theorem 4.2.1]{BS}. Because $\phi(\fa)R\subseteq
\Ann_R(\F^{\phi}(N))$, we deduce that $$\H_{\phi(\fb)R}^d(\F^{\phi}(N))\cong \H_{(\phi(\fa)R+
\phi(\fb)R)}^d(\F^{\phi}(N))\cong \H_{\phi(\fa+\fb)R}^d(\F^{\phi}(N)).$$
	
Since either $\phi$ is locally contracting or $R$ is $\F^{\phi}\!$-finite, $\phi(\fm)R$ is $\fm$-primary by Lemma
\ref{2.3}(vi) and (x), which implies that the ideal $\phi(\fa+\fb)R$ is also $\fm$-primary. Therefore,
$$\H_{\phi(\fa+\fb)R}^d(\F^{\phi}(N))\cong \H_{\fm}^d(\F^{\phi}(N)),$$ and so $\F^{\phi}(\H_{\fm}^d(N))\cong
\H_{\fm}^d(\F^{\phi}(N))$,  as desired.
\end{proof}

\begin{lemma}\label{3.5a} Let $\fa$ be an ideal of $R$ and let $\phi:R\rightarrow R$ be a ring endomorphism such that
$\Rad(\phi(\fa)R)=\Rad(\fa)$. Let $\ell=\ara(\fa)$. Then $F^{\phi}(\H_{\fa}^{\ell}(R))\cong \H_{\fa}^{\ell}(R)$.
\end{lemma}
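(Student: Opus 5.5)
Since $\ell=\ara(\fa)$, we choose elements $x_1,\dots,x_\ell\in R$ with $\Rad(\fa)=\Rad(\fb)$, where $\fb=\langle x_1,\dots,x_\ell\rangle$. Local cohomology depends only on the radical of the support ideal, so $\H^i_{\fa}(-)\cong \H^i_{\fb}(-)$ for all $i$. In particular $\H^i_{\fa}(-)=0$ for all $i>\ell$, because the Čech complex on $\ell$ elements has length $\ell$. The long exact sequence then shows that $\H^{\ell}_{\fa}(-)$ is a right exact additive functor commuting with direct sums, so $\H^{\ell}_{\fa}(M)\cong \H^{\ell}_{\fa}(R)\otimes_R M$ for every $R$-module $M$ by Watts' theorem. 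Alternatively, and more concretely, we read this off the Čech complex: $\H^{\ell}_{\fb}(M)\cong \H^{\ell}_{\fb}(R)\otimes_R M$, since it is the cokernel of the last Čech differential and tensoring is right exact.

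Next we apply this with $M={}^{\phi}\!R$, regarded as a left $R$-module via $\phi$. This gives
$$\F^{\phi}(\H^{\ell}_{\fa}(R))=\H^{\ell}_{\fa}(R)\otimes_R {}^{\phi}\!R\cong \H^{\ell}_{\fb}({}^{\phi}\!R).$$
We must track the right $R$-module structure: the isomorphism is natural in $M$, so it respects the right $R$-action on ${}^{\phi}\!R$ by multiplication, because that action consists of left-module endomorphisms. This step is the same as the right-exactness argument in the proof of Lemma~\ref{5.2a}.

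We then apply the Independence Theorem \cite[Theorem 4.2.1]{BS} to the ring map $\phi:R\to R$. Under it, $\H^{\ell}_{\fb}({}^{\phi}\!R)$, computed over the source ring, is isomorphic to $\H^{\ell}_{\phi(\fb)R}(R)$, computed over the target ring $R$. This isomorphism is compatible with the target-ring structure, which is exactly the right $R$-module structure on $\F^{\phi}(\H^{\ell}_{\fa}(R))$. Concretely, the Čech complex of ${}^{\phi}\!R$ on $x_1,\dots,x_\ell$ is literally the Čech complex of $R$ on $\phi(x_1),\dots,\phi(x_\ell)$.

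Finally, $\Rad(\phi(\fb)R)=\Rad(\phi(\fa)R)$, since $\fb$ and $\fa$ have the same radical and $\phi$ preserves the containments $\fb^n\subseteq\fa$ and $\fa^m\subseteq\fb$. By hypothesis this equals $\Rad(\fa)$. Hence
$$\H^{\ell}_{\phi(\fb)R}(R)\cong \H^{\ell}_{\fa}(R),$$
and this completes the proof. The only delicate point is the bookkeeping of module structures in the Independence Theorem step. Checking that $\Rad(\phi(\fb)R)=\Rad(\phi(\fa)R)$ is routine.
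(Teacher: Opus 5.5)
Your proof is correct and follows the paper's outline. You pass to an $\ell$-generated ideal $\fb=\langle x_1,\dots,x_\ell\rangle$ with the same radical, identify $\F^{\phi}(\H^{\ell}_{\fb}(R))$ with $\H^{\ell}_{\langle\phi(x_1),\dots,\phi(x_\ell)\rangle}(R)$, and then compare radicals. The only difference is cosmetic: the paper gets the middle identification from $\H^{\ell}_{\fb}(R)\cong\varinjlim_n R/\langle x_1^n,\dots,x_\ell^n\rangle$, using Lemma~\ref{2.3}(iv) and the fact that $\F^{\phi}$ commutes with direct limits, whereas you use right exactness of top local cohomology plus the Independence Theorem, as in Lemma~\ref{5.2a}.
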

	
\begin{proof} There exist $x_1, \ldots, x_{\ell}\in \fa$ such that $\Rad(\fa)=\Rad\left(\langle x_1, \ldots,
x_{\ell}\rangle\right)$, and so $$\H_{\fa}^{\ell}(R)\cong \H_{\langle x_1, \ldots, x_{\ell}\rangle}^{\ell}(R)\cong
\underset{n \in \mathbb{N}}{\varinjlim} \  \frac{R}{\langle{x_1}^n,\ldots , {x_{\ell}}^n\rangle}.$$
Hence, $$\F^{\phi}(\H_{\fa}^{\ell}(R))\cong \underset{n \in \mathbb{N}}{\varinjlim}\ \frac{R}{\langle{\phi(x_1)}^n,
\ldots, {\phi(x_\ell)}^n\rangle} \cong \H_{\langle{\phi(x_1)}, \ldots, {\phi(x_\ell)}\rangle}^{\ell}(R).$$
We have
$$
\begin{array}{rlllllllllll}
\phi(\fa) R &\subseteq  \phi(\Rad(\fa)) R\\
&= \phi(\Rad(\langle x_1, \ldots, x_\ell\rangle)) R\\
&\subseteq \Rad(\langle {\phi(x_1)}, \ldots, {\phi(x_\ell)}\rangle)\\
&\subseteq \Rad(\phi(\fa) R).
\end{array}
$$
By taking radicals in the above expression and using the fact that the radical operation is idempotent, we obtain
$$\Rad\left(\langle {\phi(x_1)}, \ldots, {\phi(x_\ell)}\rangle\right)=\Rad(\phi(\fa)R)=\Rad(\fa).$$
This implies that $$\H_{\langle {\phi(x_1)}, \ldots, {\phi(x_\ell)}\rangle}^{\ell}(R)\cong \H_{\fa}^{\ell}(R),$$ and
the proof is complete.
\end{proof}

Let $\phi:R \to R$ be a ring endomorphism and $\fa$ an ideal of $R$. In view of Lemma \ref{3.5a}, it is natural to
ask under what conditions the equality $\Rad(\phi(\fa)R) = \Rad(\fa)$ holds. We know that this equality holds for
prime ideals when $\phi$ is locally contracting. In the next result, we extend this property to arbitrary ideals
$\fa$ of $R$ in the case where $R$ is regular. This extension will be used in the proof of Lemma \ref{5.3a}.

\begin{lemma}\label{5.3b} Let $R$ be a regular ring with a locally contracting endomorphism $\phi:R\to R$. Then
$\Rad(\phi(\fa)R)=\Rad(\fa)$ for every ideal $\fa$ of $R$.
\end{lemma}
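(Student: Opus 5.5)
The plan is to compare the two radicals prime by prime. Since $\Rad(\fb)$ is the intersection of the prime ideals containing $\fb$, it suffices to show that, for every prime ideal $\fp$ of $R$,
$$\fa\subseteq \fp \iff \phi(\fa)R\subseteq \fp .$$
The only input will be Lemma \ref{2.3}(vi): $\Rad(\phi(\fp)R)=\fp$ for every prime $\fp$, which holds because $\phi$ is locally contracting.

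For the forward direction, suppose $\fa\subseteq\fp$. Then
$$\phi(\fa)R\subseteq \phi(\fp)R\subseteq \Rad(\phi(\fp)R)=\fp .$$
Intersecting over all primes $\fp\supseteq \fa$ gives $\Rad(\phi(\fa)R)\subseteq\Rad(\fa)$.

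For the reverse direction, the key step is to show that $\phi^{-1}(\fp)=\fp$ for every prime $\fp$.
\begin{itemize}
\item[(1)] Since $\phi(\fp)\subseteq \Rad(\phi(\fp)R)=\fp$, we have $\fp\subseteq\phi^{-1}(\fp)$.
\item[(2)] Put $\fq=\phi^{-1}(\fp)$, which is a prime ideal of $R$. By definition $\phi(\fq)\subseteq\fp$, so $\phi(\fq)R\subseteq\fp$.
\item[(3)] Taking radicals in (2) and applying Lemma \ref{2.3}(vi) to the prime $\fq$ gives $\fq=\Rad(\phi(\fq)R)\subseteq\fp$.
\end{itemize}
Together, (1) and (3) give $\phi^{-1}(\fp)=\fp$. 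Now if $\phi(\fa)R\subseteq\fp$, then $\fa\subseteq\phi^{-1}(\fp)=\fp$. Intersecting over all primes $\fp\supseteq\phi(\fa)R$ yields $\Rad(\fa)\subseteq\Rad(\phi(\fa)R)$, which finishes the proof.

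The main obstacle I anticipated was this reverse inclusion, namely that $\phi$ cannot "shrink" a prime under contraction. I first considered using flatness of ${}^{\phi}\!R$ from Lemma \ref{2.4aa} (regular rings are reduced) together with going down. However, the argument in (1)–(3), which applies Lemma \ref{2.3}(vi) to the prime $\phi^{-1}(\fp)$ itself, seems to settle it directly.

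If that is correct, the regularity hypothesis is not actually needed, and the statement holds for any locally contracting $\phi$. I would double-check this before finalizing, and if some subtlety arises, fall back on flatness and going down via Lemma \ref{2.4aa}.
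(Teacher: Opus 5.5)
Your proof is correct, and it is more elementary and more general than the paper's.

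\textbf{How the paper argues.} The paper compares the minimal primes of $\fa$ and of $\phi(\fa)R$. It uses Lemma \ref{2.4aa} (a regular ring is reduced, so ${}^{\phi}\!R$ is flat) and upgrades this to faithful flatness via Lemma \ref{2.3}(vi). It then applies going-down \cite[Lemma B.1.3]{SH} to see that a prime minimal over $\phi(\fa)R$ contracts to a prime minimal over $\fa$. This is combined with the claim, stated there only as ``easily verified'', that $\phi^{-1}(\fr)=\fr$ for every prime $\fr$.

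\textbf{How your route differs.} You prove $\phi^{-1}(\fp)=\fp$ directly, by applying Lemma \ref{2.3}(vi) both to $\fp$ and to the prime $\phi^{-1}(\fp)$. You then observe that this fact alone gives $V(\phi(\fa)R)=V(\fa)$, because $\phi(\fa)\subseteq\fp$ if and only if $\fa\subseteq\phi^{-1}(\fp)=\fp$. In fact this single equivalence gives both of your directions at once.

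\textbf{What this buys.} Flatness and going-down become redundant. As you suspected, regularity is not needed: $\Rad(\phi(\fa)R)=\Rad(\fa)$ holds for every locally contracting $\phi$. Consequently, the hypothesis of Lemma \ref{3.5a} is automatic whenever $\phi$ is locally contracting. The paper loses nothing in its own application, since Lemma \ref{5.3a} needs regularity anyway for flat base change. Still, your argument is shorter and self-contained.

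\textbf{A side remark.} The equality $\phi^{-1}(\fp)=\fp$ is essentially built into the definition. For $\phi_{\fp}$ to be well defined one needs $\phi(R\setminus\fp)\subseteq R\setminus\fp$. Contractedness of $\phi_{\fp}$ forces $\phi(\fp)\subseteq\fp$ by Lemma \ref{2.3}(ix).
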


\begin{proof} Let $\fa$ be an ideal of $R$. Clearly, we may and do assume that $\fa$ is proper. To prove the claim,
it suffices to show that the sets of minimal prime ideals over the two ideals $\fa$ and $\phi(\fa)R$ coincide.

By Lemma \ref{2.4aa}, $\phi$ is flat, and thus it is faithfully flat by Lemma \ref{2.3}(vi). Hence, $\phi$
satisfies both the Going-Down and Lying-Over conditions by \cite[Propositions B.1.1 and B.1.2]{SH}. Since $\phi$ is
locally contracting, it can be easily verified that $\phi^{-1}(\fr)=\fr$ for every prime ideal $\fr$ of $R$.

Assume that $\fp$ is a prime ideal of $R$ minimal over $\phi(\fa)R$. By \cite[Lemma B.1.3]{SH}, there
exists a prime ideal $\fq$ of $R$ which is minimal over $\fa$ such that $\phi^{-1}(\fq)=\fp$. But since $\phi^{-1}(\fq)
=\fq$, we have $\fp=\fq$, and so $\fp$ is minimal over $\fa$.

Conversely, assume that $\fp$ is a prime ideal of $R$ minimal over $\fa$. Then $$\phi(\fa)R\subseteq \phi(\fp)R\subseteq
\Rad(\phi(\fp)R)=\fp.$$ Let $\fq$ be a prime ideal of $R$ satisfying $\phi(\fa)R\subseteq \fq\subseteq \fp$. Then $\fa\subseteq
\phi^{-1}(\fq)\subseteq \phi^{-1}(\fp)$, and so $\fq=\fp$. Thus, $\fp$ is minimal over $\phi(\fa) R$.
\end{proof}

Next, we extends \cite[Lemma 1.8]{HS93} to the context of locally contracting endomorphism.

\begin{lemma}\label{5.3a} Let $R$ be a regular ring with a locally contracting endomorphism $\phi:R\to R$ and
$\fa$ an ideal of $R$. Then $\F^{\phi} (\H^i_{\fa} (M)) \cong \H^i_{\fa}(\F^{\phi}(M))$ for every $R$-module $M$
and all $i\geq 0$.
\end{lemma}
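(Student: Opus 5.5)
The plan is to follow the flat base change argument of \cite[Lemma 1.8]{HS93}, using the \v{C}ech complex description of local cohomology. First, since $R$ is regular and $\phi$ is locally contracting, Lemma \ref{2.4aa} shows that ${}^{\phi}\!R$ is a flat $R$-module. Regular rings are reduced, so that lemma applies. Consequently $\F^{\phi}(-)=(-)\otimes_R {}^{\phi}\!R$ is an exact functor.

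Next, write $\fa=\langle a_1,\dots,a_t\rangle$ and let $C^\bullet(a_1,\dots,a_t;M)$ denote the \v{C}ech complex. Its terms are direct sums of localizations $M_{a_{i_1}\cdots a_{i_k}}$, and its cohomology is $\H^i_{\fa}(M)$.

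The key identification is
$$M_{x}\otimes_R {}^{\phi}\!R\cong (M\otimes_R {}^{\phi}\!R)_{\phi(x)},$$
for $x\in R$, where the localization on the right is with respect to the right $R$-module structure. This is the analogue of Lemma \ref{2.3}(ii) and (v). Multiplication by $x$ on $M$ becomes multiplication by $\phi(x)$ after applying $\F^{\phi}$, and $\F^{\phi}$ commutes with direct limits. Writing $M_x=\varinjlim(M\xrightarrow{x}M\xrightarrow{x}\cdots)$ gives the identification, and it is compatible with the \v{C}ech differentials. Hence
$$C^\bullet(a_1,\dots,a_t;M)\otimes_R{}^{\phi}\!R\cong C^\bullet(\phi(a_1),\dots,\phi(a_t);\F^{\phi}(M)).$$
Because $\F^{\phi}$ is exact, it commutes with taking cohomology. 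This yields
$$\F^{\phi}(\H^i_{\fa}(M))\cong \H^i_{\phi(\fa)R}(\F^{\phi}(M)).$$

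Finally, Lemma \ref{5.3b} gives $\Rad(\phi(\fa)R)=\Rad(\fa)$. Since local cohomology depends only on the radical of the ideal, $\H^i_{\phi(\fa)R}(\F^{\phi}(M))\cong \H^i_{\fa}(\F^{\phi}(M))$, which proves the claim.

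The main obstacle is keeping the module structures straight. $\F^{\phi}(M)$ is regarded as an $R$-module via the right action, and it has to be checked that the \v{C}ech complex identification is really an isomorphism of complexes of such modules. The first thing to try is verifying the generator-level formula $(m/x^n)\otimes s\mapsto (m\otimes s)/\phi(x)^n$ and checking that it is well defined and compatible with the differentials, using $(rm)\otimes s=m\otimes\phi(r)s$.
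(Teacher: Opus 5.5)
Your proposal is correct and follows the paper's proof. Both arguments take flatness of ${}^{\phi}\!R$ from Lemma \ref{2.4aa}, use flat base change to get $\F^{\phi}(\H^i_{\fa}(M))\cong \H^i_{\phi(\fa)R}(\F^{\phi}(M))$, and then invoke Lemma \ref{5.3b} to replace $\phi(\fa)R$ by $\fa$. The only difference is that you verify the base-change step by hand through the \v{C}ech complex, whereas the paper simply cites the Flat Base Change theorem of Brodmann--Sharp.
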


\begin{proof} Let $i$ be a non-negative integer. By Lemma \ref{2.4aa}, the ring endomorphism $\phi$ is flat.
Hence, by the Flat Base Change theorem \cite[Theorem 4.3.2]{BS}, we have $$\F^{\phi} (\H^i_{\fa} (M))\cong
\H^i_{\phi(\fa)R}(\F^{\phi}(M)).$$ Lemma \ref{5.3b} implies that $\Rad(\phi({\fa})R)=\Rad(\fa)$, and thus
$$\H^i_{\phi(\fa)R}(\F^{\phi}(M))\cong \H^i_{\fa}(\F^{\phi}(M)).$$ Therefore, $$\F^{\phi} (\H^i_{\fa} (M))
\cong \H^i_{\fa}(\F^{\phi}(M)),$$ as desired.
\end{proof}

Next, we establish a far-reaching generalization of \cite[Corollary 3.8]{HS93}.

\begin{theorem}\label{5.4a} Let $R$ be a regular ring with a locally contracting endomorphism $\phi: R\rightarrow R$.
Let $\fa$ be an ideal of $R$ and $\ell$ be a non-negative integer. Assume that $R$ is $\F^{\phi}\!$-finite. If the $R$-module
$\H^{\ell}_{\fa}(R)$ is Artinian, then it is injective.
\end{theorem}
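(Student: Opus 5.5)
The plan is to reduce Theorem \ref{5.4a} to Lemma \ref{5.1a}. Since $R$ is regular, $\phi$ is locally contracting, and $R$ is $\F^{\phi}\!$-finite, Lemma \ref{5.1a} shows that an Artinian $R$-module $A$ with $\F^{\phi}(A)\cong A$ is injective. Taking $A=\H^{\ell}_{\fa}(R)$, which is Artinian by hypothesis, it therefore suffices to produce an $R$-isomorphism $\F^{\phi}(\H^{\ell}_{\fa}(R))\cong \H^{\ell}_{\fa}(R)$.

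For this isomorphism I would invoke Lemma \ref{5.3a} with $M=R$ and $i=\ell$. That lemma applies because $R$ is regular and $\phi$ is locally contracting, and it gives
$$\F^{\phi}(\H^{\ell}_{\fa}(R))\cong \H^{\ell}_{\fa}(\F^{\phi}(R)).$$
Lemma \ref{2.3}(i) gives $\F^{\phi}(R)\cong R$, so the right-hand side is $\H^{\ell}_{\fa}(R)$.

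The one point to check is which module structure each isomorphism respects. Lemma \ref{2.3}(i) is an isomorphism of right $R$-modules, and Lemma \ref{5.3a} arises from flat base change along $\phi$, so it also respects the structure coming from the target copy of $R$. Lemma \ref{5.1a} uses $\F^{\phi}(A)$ with its natural $R$-module structure, which in this paper is this same target structure, as in the proof of Lemma \ref{5.1a} and in Lemma \ref{5.1ab}. So the composite is an isomorphism of the right kind and Lemma \ref{5.1a} applies.

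If that bookkeeping caused trouble, I would fall back on an explicit computation. Choose $x_1,\dots,x_n$ generating $\fa$ and write $\H^{\ell}_{\fa}(R)$ as the $\ell$-th cohomology of the \v{C}ech complex on the $x_j$. The functor $\F^{\phi}$ is exact because ${}^{\phi}R$ is flat by Lemma \ref{2.4aa}. Applying it to the \v{C}ech complex gives the \v{C}ech complex on $\phi(x_1),\dots,\phi(x_n)$, whose $\ell$-th cohomology is $\H^{\ell}_{\phi(\fa)R}(R)$. Lemma \ref{5.3b} gives $\Rad(\phi(\fa)R)=\Rad(\fa)$, so this equals $\H^{\ell}_{\fa}(R)$. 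All of these are right-module isomorphisms.

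I do not expect a serious obstacle: the substance of the theorem already sits in Lemmas \ref{5.1a} and \ref{5.3a}, and the only delicate step is the module-structure check above.
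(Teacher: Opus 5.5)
Your proposal is correct and follows the paper's proof: the paper also gets $\F^{\phi}(\H^{\ell}_{\fa}(R))\cong \H^{\ell}_{\fa}(R)$ from Lemma \ref{5.3a} (taking $M=R$ and using $\F^{\phi}(R)\cong R$), and then deduces injectivity of the Artinian module $\H^{\ell}_{\fa}(R)$ from Lemma \ref{5.1a}. Your module-structure check and your fallback computation via flat base change and Lemma \ref{5.3b} are sound, but they add nothing beyond what Lemma \ref{5.3a} already provides.
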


\begin{proof} By Lemma \ref{5.3a}, we have $\F^{\phi}(\H^{\ell}_{\fa}(R))\cong \H^{\ell}_{\fa}(R)$. Since $\H^{\ell}_{\fa}(R)$
is Artinian, the assertion follows immediately from Lemma \ref{5.1a}.
\end{proof}

We conclude this section by recording the following immediate corollary.
	
\begin{corollary}\label{5.5a}  Let $R$ be a regular ring with a locally contracting endomorphism $\phi: R\rightarrow R$.
Let $\fa$ be an ideal of $R$ with $\dim R/\fa=0$. Assume that $R$ is $\F^{\phi}\!$-finite. Then the $R$-module
$\H^{\ell}_{\fa}(R)$ is injective for all $\ell\geq 0$. In particular, the $R$-module  $\H^{\ell}_{\fm}(R)$ is injective
for every maximal ideal $\fm$ of $R$ and all $\ell\geq 0$.
\end{corollary}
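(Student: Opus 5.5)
The plan is to reduce to Theorem \ref{5.4a} by showing that every $\H^{\ell}_{\fa}(R)$ is Artinian when $\dim R/\fa=0$.

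First, since $R$ is Noetherian and $\dim R/\fa=0$, the ring $R/\fa$ is Artinian. Hence only finitely many maximal ideals contain $\fa$, say $\fm_1,\dots,\fm_t$ (if $\fa=R$ all local cohomology vanishes and there is nothing to prove). Then $\Rad(\fa)=\fm_1\cap\dots\cap\fm_t$. The ideals $\fm_j$ are pairwise comaximal, so the Mayer--Vietoris sequence, or equivalently the standard decomposition of $\Gamma_{\fa}$, gives
\[\H^{\ell}_{\fa}(R)\cong\bigoplus_{j=1}^t \H^{\ell}_{\fm_j}(R)\]
for every $\ell$.

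Second, for a maximal ideal $\fm$ of $R$ and a finitely generated module such as $R$, each $\H^{\ell}_{\fm}(R)$ is Artinian. This is the standard fact \cite{BS} that local cohomology of a finitely generated module with respect to a maximal ideal is Artinian. One way to see it: $\H^{\ell}_{\fm}(R)\cong \H^{\ell}_{\fm R_{\fm}}(R_{\fm})$, which is $\fm$-torsion with finite-length socle. Finite direct sums of Artinian modules are Artinian, so $\H^{\ell}_{\fa}(R)$ is Artinian for all $\ell\geq 0$.

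Third, Theorem \ref{5.4a} applies, since $R$ is regular, $\phi$ is locally contracting and $R$ is $\F^{\phi}$-finite. It shows that $\H^{\ell}_{\fa}(R)$ is injective for each $\ell$. The particular case follows by taking $\fa=\fm$, since $\dim R/\fm=0$.

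There is no serious obstacle. The only step needing care is the Artinianness of $\H^{\ell}_{\fa}(R)$ in the non-local setting, and this is handled by the decomposition over the finitely many maximal ideals containing $\fa$. Alternatively, one can note directly that $\H^{\ell}_{\fa}(R)$ is $\fa$-torsion and that $\Hom_R(R/\fa,\H^{\ell}_{\fa}(R))$ has finite length, then apply Melkersson's criterion.
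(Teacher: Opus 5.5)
Your proposal is correct and follows the paper's strategy: show that every $\H^{\ell}_{\fa}(R)$ is Artinian, then apply Theorem \ref{5.4a}. The only difference is in the Artinianness step. The paper argues directly from a minimal injective resolution of $R$: since Bass numbers are finite, each $\Gamma_{\fa}(\E^i)$ is a finite direct sum of modules $\E_R(R/\fm)$ with $\fm\supseteq\fa$. You instead first split $\H^{\ell}_{\fa}(R)\cong\bigoplus_j \H^{\ell}_{\fm_j}(R)$ by Mayer--Vietoris and then cite the standard Artinianness of $\H^{\ell}_{\fm}(R)$. That standard fact is proved by the same injective-resolution argument, which is also what justifies the finite-length socle you assert in passing.
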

	
\begin{proof} Because $\dim R/\fa=0$, the ideal $\Rad(\fa)$ is the intersection of finitely many maximal ideals of $R$.
Let $$0\lo \E^0\lo \cdots \lo \E^i\lo \cdots$$ be a minimal injective resolutions of $R$. Since, for any prime ideal
$\fp$ of $R$, the Bass numbers of $R$ with respect to $\fp$ are finite, only finitely many copies of $\E_R(R/\fp)$
occur in the decomposition of each $E^i$ into indecomposable injective modules. For any prime ideal $\fp$ of $R$, we
have $$\Gamma_{\fa}(\E_R(R/\fp))=\begin{cases} \E_R(R/\fp) \hspace{.5cm}   \text{if} \hspace{.5cm}   \fa\subseteq \fp \\
0 \hspace{1.7cm}   \text{if} \hspace{.5cm}     \fa\nsubseteqq \fp.
\end{cases}
$$
As the functor $\Gamma_{\fa}$ commutes with direct sums, it follows that $\Gamma_{\fa}(E^i)$ is an Artinian $R$-module
for all $i\geq 0$. Hence, the modules $\H^{\ell}_{\fa}(R)$ are Artinian for all $\ell \geq 0$. Therefore, the claim follows
immediately from Theorem \ref{5.4a}.
\end{proof}

\section{Preservation of injective modules under the functor $\F^{\phi}$}

In this section, we investigate the preservation of injective modules under the functor $\F^{\phi}$. Theorem
\ref{3.6a} is the main result of the section. Our findings extend several results from \cite{MA14}, which concern
the Frobenius endomorphism, to the broader setting of contracting endomorphisms.

Our first result improves \cite[Proposition 3.10]{MA14}.
		
\begin{lemma}\label{3.3a}  Let $(R,\fm,\Bbbk)$ be a local ring with a ring endomorphism $\phi: R\rightarrow R$.
Assume that $R$ is $\F^{\phi}\!$-finite and that either $R$ is complete or $\phi$ is locally contracting. Then the
following are equivalent:
\begin{itemize}
\item[(i)]  $\widetilde{\F}^{\phi} (R) \cong R$.
\item[(ii)] $\F^{\phi}(\E_R(\Bbbk)) \cong \E_R(\Bbbk)$.
\end{itemize}	
\end{lemma}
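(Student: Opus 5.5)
The plan is to pass between (i) and (ii) by Matlis duality, using Corollary \ref{3.2a}(ii). I would treat the complete case first and then reduce the locally contracting case to it by completion.

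\emph{Complete case.} Write $\E=\E_R(\Bbbk)$. Since $R$ is complete, $\D(\E)\cong R$ and $\D(R)\cong \E$. Applying Corollary \ref{3.2a}(ii) with $M=\E$ gives
$$\D(\F^{\phi}(\E))\cong \widetilde{\F}^{\phi}(\D(\E))\cong \widetilde{\F}^{\phi}(R).$$
Assume (ii), so $\F^{\phi}(\E)\cong \E$. Then the left-hand side is $\D(\E)\cong R$, which gives (i).

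Conversely, assume (i). Then $\D(\F^{\phi}(\E))\cong R$. Because ${}^{\phi}\!R$ is finitely generated, $\F^{\phi}(\E)$ is a quotient of some $\E^n$ and is therefore Artinian. For Artinian modules over a complete local ring, Matlis duality is reflexive. Hence $\F^{\phi}(\E)\cong \D\D(\F^{\phi}(\E))\cong \D(R)\cong \E$, which is (ii).

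One point needs care here. Corollary \ref{3.2a} is an isomorphism with respect to the $R$-structures coming from the right action on ${}^{\phi}\!R$. I would check that the isomorphisms in (i) and (ii) are meant in exactly these structures, so that the duality arguments are legitimate.

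\emph{Locally contracting case.} Here I would reduce to $\widehat R$. By Lemma \ref{2.3}(vii), $\widehat{\phi}$ is contracting. By Lemma \ref{2.3}(viii), $\widehat R$ is $\F^{\widehat\phi}$-finite and $\widehat R\otimes_R{}^{\phi}\!R\cong {}^{\widehat\phi}\widehat R$. The next two steps transfer each condition to $\widehat R$.

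\begin{enumerate}
\item[(a)] Condition (i). Since ${}^{\phi}\!R$ is finitely presented and $\widehat R$ is flat over $R$,
$$\widetilde{\F}^{\phi}_R(R)\otimes_R\widehat R\cong \Hom_{\widehat R}\bigl(\widehat R\otimes_R{}^{\phi}\!R,\widehat R\bigr)\cong \widetilde{\F}^{\widehat\phi}_{\widehat R}(\widehat R).$$
Finitely generated $R$-modules are isomorphic if and only if their completions are isomorphic. So (i) holds for $R$ if and only if $\widetilde{\F}^{\widehat\phi}_{\widehat R}(\widehat R)\cong\widehat R$.
\item[(b)] Condition (ii). We have $\E_R(\Bbbk)=\E_{\widehat R}(\Bbbk)$, and $\E\otimes_R\widehat R\cong \E$ because $\E$ is Artinian (the remark used in the proof of Lemma \ref{5.1a}). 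By Lemma \ref{2.3}(v),
$$\F^{\phi}_R(\E)\cong \F^{\phi}_R(\E)\otimes_R\widehat R\cong \F^{\widehat\phi}_{\widehat R}(\E).$$
Artinian $R$-modules and Artinian $\widehat R$-modules have the same homomorphisms. Hence (ii) over $R$ is equivalent to (ii) over $\widehat R$.
\end{enumerate}
The complete case applied to $\widehat R$ then finishes the proof.

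The step I expect to be the main obstacle is (a): justifying that $\widetilde{\F}^{\phi}(R)\cong R$ descends from $\widehat R$ to $R$. The isomorphism must respect the $R$-module structure through the right action on ${}^{\phi}\!R$. I would handle this by noting that $\widetilde{\F}^{\phi}(R)$ is a finitely generated module over $R$ acting on the right. For that module, the standard fact that $N\otimes_R\widehat R\cong N'\otimes_R\widehat R$ implies $N\cong N'$ applies, once the completion isomorphism above is checked to be compatible with that action.
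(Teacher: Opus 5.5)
Your proposal is correct and follows essentially the same route as the paper. In the complete case, both arguments apply Corollary \ref{3.2a}(ii) to $\E$ and use Matlis reflexivity of the Artinian module $\F^{\phi}(\E)$. In the locally contracting case, both reduce to $\widehat R$ via Lemma \ref{2.3}(v),(viii), the isomorphism $\widehat R\otimes_R\widetilde{\F}^{\phi}(R)\cong\widetilde{\F}^{\widehat\phi}(\widehat R)$, and the fact that finitely generated modules are isomorphic if and only if their completions are. The paper also leaves implicit the module-structure compatibility you flag, and your direct argument that $\F^{\phi}(\E)$ is Artinian (as a quotient of $\E^n$) is a harmless substitute for its appeal to Lemma \ref{5.1ab}.
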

	
\begin{proof} Assume that $\phi$ is locally contracting. Set $\E=\E_R(\Bbbk)$. As mentioned in the proof of
Lemma \ref{5.1a}, any Artinian $R$-module $A$ naturally possesses the structure of an Artinian $\widehat{R}$-module,
and the induced natural map $A\otimes_R\widehat{R}\to A$ is an $\widehat{R}$-isomorphism. In particular, there is a
natural $\widehat{R}$-isomorphism $$\E_{\widehat{R}}(\widehat{R}/\fm \widehat{R})=\E\cong \E\otimes_R\widehat{R}.$$

By Lemma \ref{5.1ab}, the $R$-module $\F^{\phi}(\E)$ is Artinian, and hence, by Lemma \ref{2.3}(v), we have the
following natural $\widehat{R}$-isomorphisms:
$$\F^{\widehat{\phi}}(\E_{\widehat{R}}(\widehat{R}/\fm \widehat{R}))\cong
\F^{\widehat{\phi}}(\E\otimes_R\widehat{R})\cong \F^{\phi}(\E)\otimes_R \widehat{R}\cong F^{\phi}(\E).$$
Also, we have the following natural $\widehat{R}$-isomorphisms: $$\widehat{R}\otimes_R \widetilde{\F}^{\phi}(R)
\cong \Hom_{\widehat{R}}(\widehat{R}\otimes_R \hspace{0.3mm}^{\phi}R,\widehat{R}\otimes_RR)\cong
\Hom_{\widehat{R}}(^{\widehat{\phi}}\widehat{R},\widehat{R})=\widetilde{\F}^{\widehat{\phi}}(\widehat{R}).$$
The first isomorphism holds due to the $\F^{\phi}\!$-finiteness of $R$, while the second arises from Lemma \ref{2.3}(viii).
By \cite[Exercise 7.5]{E}, for any two finitely generated $R$-modules $M$ and $N$, $M\underset{R}
\cong N$ if and only if $\widehat{R}\otimes_RM\underset{\widehat{R}}\cong \widehat{R}\otimes_RN$. Therefore,
$\widetilde{\F}^{\phi} (R)\cong R$ if and only if $\widetilde{\F}^{\widehat{\phi}}(\widehat{R})\cong \widehat{R}$.
Thus, we may and do assume that $R$ is complete and $\F^{\phi}\!$-finite.

(i)$\Rightarrow$(ii)  By Corollary \ref{3.2a}(ii), we have $$\D(\F^{\phi}(\E))\cong \widetilde{\F}^{\phi}(\D(\E))
\cong \widetilde{\F}^{\phi}(R)\cong R.$$ Since the $R$-module $\F^{\phi}(\E)$ is Artinian, by Matlis duality,
$D(\D(\F^{\phi}(\E)))\cong \F^{\phi}(\E)$. Therefore, $$\F^{\phi}(\E)\cong \D(\D(\F^{\phi}(\E)))\cong \D(R)\cong \E.$$
		
(ii)$\Rightarrow$(i) Applying Corollary \ref{3.2a}(ii) yields the following isomorphisms:
$$R\cong \D(\E) \cong \D(\F^{\phi} (\E)) \cong \widetilde{\F}^{\phi} (\D(\E)) \cong \widetilde{\F}^{\phi} (R).$$
\end{proof}

The preceding result yields the following corollary.
	
\begin{corollary}\label{3.4a} Let $(R,\fm,\Bbbk)$ be a complete local ring with a ring endomorphism $\phi: R\rightarrow R$.
Assume that $R$ is $\F^{\phi}\!$-finite and $\F^{\phi} (\E_R(\Bbbk))\cong \E_R(\Bbbk)$.  Then $\F^{\phi}(I)$ is an injective
$R$-module for every injective $R$-module $I$.
\end{corollary}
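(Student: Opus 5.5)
The plan is to reduce to a decomposition of an arbitrary injective module and to check injectivity summand by summand, using that $\F^{\phi}$ commutes with direct sums. Over a Noetherian ring, direct sums of injective modules are injective. Every injective $R$-module is a direct sum of indecomposables $\E_R(R/\fp)$ with $\fp\in\Spec R$. So it suffices to show that $\F^{\phi}(\E_R(R/\fp))$ is injective for each prime $\fp$.

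For $\fp=\fm$ this is exactly the hypothesis $\F^{\phi}(\E_R(\Bbbk))\cong \E_R(\Bbbk)$. For a general $\fp$, I would avoid localization, since $\phi$ is not assumed locally contracting. Instead I would argue via Baer's criterion together with Matlis duality, which is available because $R$ is complete. Set $\E=\E_R(\Bbbk)$. I would first show that $\F^{\phi}$ preserves flat modules: if $P$ is flat, then $\F^{\phi}(P)=P\otimes_R{}^{\phi}\!R$. Then
\[
\Tor^R_i(N,P\otimes_R{}^{\phi}\!R)\cong \Tor_i^R(N,{}^{\phi}\!R)\otimes_R P
\]
is not obviously zero, so this route needs care.

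A cleaner route uses the Hom description of injectives. Over a complete local ring, every injective $I$ is a direct summand of $\Hom_R(F,\E)$ for some free module $F=R^{(\Lambda)}$, i.e.\ of $\E^{\Lambda}$: take $F\twoheadrightarrow \D(I)$, so that $I\hookrightarrow \D\D(I)\hookrightarrow \D(F)=\E^{\Lambda}$, and the embedding splits because $I$ is injective. Since $\F^{\phi}$ is additive, $\F^{\phi}(I)$ is then a direct summand of $\F^{\phi}(\E^{\Lambda})$. The key step is to compare $\F^{\phi}(\E^{\Lambda})=\E^{\Lambda}\otimes_R{}^{\phi}\!R$ with $(\E\otimes_R{}^{\phi}\!R)^{\Lambda}$. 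Because ${}^{\phi}\!R$ is finitely presented ($R$ being Noetherian and $\F^{\phi}$-finite), tensoring with it commutes with arbitrary direct products. Hence
\[
\F^{\phi}(\E^{\Lambda})\cong \F^{\phi}(\E)^{\Lambda}\cong \E^{\Lambda},
\]
and this module is injective, as a product of injectives. A direct summand of an injective module is injective, so $\F^{\phi}(I)$ is injective.

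The main obstacle is keeping track of module structures. The isomorphism $\F^{\phi}(\E)\cong \E$ is one of right $R$-modules, and $\F^{\phi}(I)$ carries the right structure. The product commutation is an isomorphism of right $R$-modules, since the right action on $M\otimes_R {}^{\phi}\!R$ comes from the second factor and the finite-presentation argument is natural in that factor. The splitting $I\mid \E^{\Lambda}$ is over the left structure, to which the functor is then applied. I would verify that each isomorphism used respects the right $R$-module structure; I expect this bookkeeping to be routine. Completeness is used only to get the embedding $I\hookrightarrow \E^{\Lambda}$ via Matlis duality, and in fact any injective embeds into some $\E^{\Lambda}$ only when its support is $\{\fm\}$. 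If that fails, I would instead embed $I$ into $\D(\D(I))$, which uses completeness through $\D(R)=\E$ and $\D(\E)=R$, so the construction remains valid.
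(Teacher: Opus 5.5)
Your ``cleaner route'' is correct, and it differs genuinely from the paper's proof.

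The paper argues on the dual side. It converts the hypothesis $\F^{\phi}(\E_R(\Bbbk))\cong\E_R(\Bbbk)$ into $\widetilde{\F}^{\phi}(R)\cong R$ via Lemma~\ref{3.3a}; this is where completeness enters, through $\D(\E_R(\Bbbk))\cong R$. It then uses Lazard's theorem and the commutation of $\widetilde{\F}^{\phi}$ with direct limits to see that $\widetilde{\F}^{\phi}$ carries flat modules to flat modules. Finally it combines $\D(\F^{\phi}(I))\cong\widetilde{\F}^{\phi}(\D(I))$ from Corollary~\ref{3.2a}(ii) with the criterion that $X$ is injective if and only if $\D(X)$ is flat.

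You stay on the injective side. $I$ is a direct summand of $\E^{\Lambda}$, and $-\otimes_R{}^{\phi}\!R$ commutes with arbitrary products because ${}^{\phi}\!R$ is finitely presented. So $\F^{\phi}(I)$ is a summand of $\F^{\phi}(\E)^{\Lambda}\cong\E^{\Lambda}$.

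Your argument is shorter and needs none of Lemma~\ref{3.3a}, Lazard's theorem or Corollary~\ref{3.2a}. It also avoids a delicate point in the paper's limit step. The isomorphisms $\widetilde{\F}^{\phi}(L_i)\cong L_i$ turn a transition matrix $(a_{jk})$ into $(\phi(a_{jk}))$. So what the limit really gives is $\widetilde{\F}^{\phi}(G)\cong\F^{\phi}(G)$, which is flat but needs that remark. In exchange, the paper's route yields $\widetilde{\F}^{\phi}(R)\cong R$ and the flatness-preservation of $\widetilde{\F}^{\phi}$, which fit the duality framework used throughout the paper.

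Your closing comments on completeness are mistaken, though this does not affect the proof. The map $I\to\D\D(I)$ is injective over every local ring, because $\E_R(\Bbbk)$ is an injective cogenerator. For $0\neq x\in I$, the map $Rx\twoheadrightarrow\Bbbk\hookrightarrow\E_R(\Bbbk)$ extends to $I$ and does not kill $x$. Neither $\D(\E_R(\Bbbk))\cong R$ nor anything else from completeness is used.

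Hence every $R$-module, whatever its support, embeds in a product of copies of $\E_R(\Bbbk)$. It is embedding in a direct sum of copies that forces support in $\{\fm\}$. So your proof never uses completeness. It establishes the statement for any $\F^{\phi}$-finite local ring with $\F^{\phi}(\E_R(\Bbbk))\cong\E_R(\Bbbk)$. In particular, the injectivity step in Corollary~\ref{3.4ac} then needs no appeal to Lemma~\ref{3.3a}.

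The preliminary reduction to the modules $\E_R(R/\fp)$ and the flatness detour can simply be dropped.
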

	
\begin{proof} Let $G$ be a flat $R$-module. By \cite[Theorem 5.40]{JR79}, $G \cong \underset{i}{\varinjlim}~ L_i$,
where the $L_i$'s are finitely generated free $R$-modules. By Lemma \ref{3.3a}, we deduce that $\widetilde{\F}^{\phi}(L_i)
\cong L_i$ for each $i$. Since $R$ is $\F^{\phi}\!$-finite, the functor $\widetilde{\F}^{\phi}$ commutes with direct
limits, and so $\widetilde{\F}^{\phi} (G) \cong G$.

An $R$-module $X$ is injective if and only if its Matlis dual $\D(X)$ is flat.  Let $I$ be an injective $R$-module.
Then $\D(I)$ is flat. Hence, by Corollary \ref{3.2a}(ii), we have $$D(I)\cong \widetilde{\F}^{\phi} (\D(I))\cong
\D(\F^{\phi}(I)).$$ Therefore, $\D(\F^{\phi}(I))$ is a flat $R$-module, which implies that $\F^{\phi}(I)$ is injective.
\end{proof}

Next, we strengthen Corollary~\ref{3.4a} by replacing the assumption that $R$ is complete with the assumption that
$\phi$ is locally contracting. For this purpose, we first present the following result, which refines
\cite[Proposition~3.5]{MA14} and whose proof adapts the argument given there.

\begin{lemma}\label{3.4ab} Let $\phi: R \to R$ be a locally contracting endomorphism such that $R$ is
$\F^{\phi}\!$-finite. Let $I$ be an injective $R$-module such that $\F^{\phi}(I)$ is injective. Then
$\F^{\phi}(I)\cong I$.
\end{lemma}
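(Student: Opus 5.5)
Over a Noetherian ring, an injective module is determined up to isomorphism by its Bass numbers $\mu(\fp,-)=\dim_{k(\fp)}\Hom_{R_\fp}(k(\fp),(-)_\fp)$, i.e.\ by the multiplicities of the summands $\E_R(R/\fp)$. So the plan is to show that $I$ and $\F^{\phi}(I)$ have the same Bass numbers at every prime $\fp$. Write $I\cong\bigoplus_{\fp}\E_R(R/\fp)^{(\mu_\fp)}$. Since $\F^{\phi}$ commutes with direct sums, $\F^{\phi}(I)\cong\bigoplus_\fp \F^{\phi}(\E_R(R/\fp))^{(\mu_\fp)}$. Each $\F^{\phi}(\E_R(R/\fp))$ is a direct summand of the injective module $\F^{\phi}(I)$ whenever $\mu_\fp\neq 0$, hence is itself injective. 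It therefore suffices to prove the following claim: if $\F^{\phi}(\E_R(R/\fp))$ is injective, then $\F^{\phi}(\E_R(R/\fp))\cong\E_R(R/\fp)$. Granting the claim, the Bass numbers of $I$ and $\F^{\phi}(I)$ agree, and this holds also for infinite cardinals $\mu_\fp$, because direct sums of injectives over a Noetherian ring decompose uniquely into indecomposables.

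To prove the claim, set $E=\E_R(R/\fp)$. Since $E$ is $\fp$-torsion and an $R_\fp$-module, Lemma~\ref{2.3}(v) together with the fact that $E\otimes_R R_\fp\cong E$ gives $\F^{\phi}_R(E)\otimes_RR_\fp\cong \F^{\phi_\fp}_{R_\fp}(E)$. Next I show that $\F^{\phi}(E)$ is already an $R_\fp$-module and is $\fp$-torsion. Elements $s\notin\fp$ act on $\F^{\phi}(E)$ through $\phi(s)$ on the right, and $\phi(s)\notin\fp$ because $\phi^{-1}(\fp)=\fp$ (as in the proof of Lemma~\ref{5.3b}; indeed $\Rad(\phi(\fp)R)=\fp$ gives $\phi^{-1}(\fp)\supseteq\fp$, and contraction at $\fp$ gives equality). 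Hence $s$ acts bijectively. Torsion follows from $\Rad(\phi(\fp)R)=\fp$ (Lemma~\ref{2.3}(vi)). Consequently $\F^{\phi}(E)$ is an injective $R$-module supported only at $\fp$ and is an $R_\fp$-module, so $\F^{\phi}(E)\cong E^{(\nu)}$ for some cardinal $\nu$. This reduces the claim to the local ring $(R_\fp,\fp R_\fp)$ with the contracting, $\F$-finite endomorphism $\phi_\fp$ (Lemma~\ref{2.3}(vii),(xi)). There $E$ becomes $\E(\Bbbk)$, and $\F^{\phi}(E)$ is Artinian by Lemma~\ref{5.1ab}, so $\nu$ is finite, and $\nu\ge1$ since $\F^{\phi}(E)\neq0$.

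It remains to show $\nu=1$, which I expect to be the main obstacle. I would compute the socle: $\Hom_R(\Bbbk,\F^{\phi}(E))$ has length $\nu$. Pass to the completion as in the proof of Lemma~\ref{3.3a}, so that $\F^{\phi}(E)$ and $E$ are unchanged, and apply Matlis duality with Corollary~\ref{3.2a}(ii). This gives $\D(\F^{\phi}(E))\cong\widetilde{\F}^{\phi}(R)=\Hom_R({}^{\phi}R,R)$, which must then be free of rank $\nu$. Evaluating ranks after localizing at a minimal prime $\fq$ gives $\nu=\operatorname{rank}\Hom_{R_\fq}({}^{\phi}R_\fq,R_\fq)$. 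I would compare this with the rank of ${}^{\phi}R$ itself: being free, $\Hom_R({}^{\phi}R,R)$ is reflexive, so ${}^{\phi}R$ is dual to a free module. Alternatively I would use minimal numbers of generators: $\F^{\phi}(\D(E))=\F^{\phi}(R)\cong R$ by Lemma~\ref{2.3}(i), and Corollary~\ref{3.2a}(i) gives $\D(\widetilde{\F}^{\phi}(E))\cong R$. My first attempt would be to combine these with the fact that $\F^{\phi}(E)\otimes$ and $\Hom$ with $\Bbbk$ interact through $\phi(\fm)R$ being $\fm$-primary, and so prove that $\F^{\phi}(E)$ has a simple socle. As in Marley's argument \cite[Proposition~3.5]{MA14}, one has $\F^{\phi}(E)=\varinjlim \F^{\phi}(R/\fm^n)^\vee$-type limits of cyclic modules $R/\phi(\fm^{[n]})R$ arising from Lemma~\ref{3.5a}. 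The socle of such a direct limit of cyclic (hence socle-one after Gorenstein-type reduction) modules should have length one, which forces $\nu=1$.
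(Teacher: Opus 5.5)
Your reductions agree with the paper's: Matlis decomposition reduces everything to $E=\E_R(R/\fp)$, one passes to $R_\fp$, and there $\F^{\phi}(E)\cong E^{\nu}$ with $1\le\nu<\infty$ by Lemma~\ref{5.1ab}. There is one slip in the localization step. The identity $(sx)\otimes r=x\otimes\phi(s)r$ shows that $\phi(s)$, not $s$, acts bijectively on $\F^{\phi}(E)$. To get the $R_\fp$-structure, note that any prime $\fq$ disjoint from $\phi(R\setminus\fp)$ satisfies $\fq=\phi^{-1}(\fq)\subseteq\fp$, so inverting $\phi(R\setminus\fp)$ already inverts $R\setminus\fp$.

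The genuine gap is the decisive step $\nu=1$. You offer several sketches, and none of them works as stated.

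In the duality route, freeness of $\Hom_R({}^{\phi}R,R)$ does not make ${}^{\phi}R$ reflexive or ``dual to a free module''. Comparing with the rank of ${}^{\phi}R$ also mixes the two $R$-module structures on $\Hom_R({}^{\phi}R,R)$. For example, take the Frobenius $f$ on $\Bbbk[[x]]$ with $\Bbbk$ perfect of characteristic $p$: $\Hom_R({}^{f}R,R)$ is free of rank $1$ in the right structure used in Corollary~\ref{3.2a}, but of rank $p$ in the ordinary one. Even carried out correctly at the single level $e=1$, a length comparison at a suitable prime $\fq$ only gives a bound like $\nu\le\mu_0(\fq,R)$, not $\nu=1$.

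The last sketch rests on false premises:
\begin{itemize}
\item Lemma~\ref{3.5a} describes $\F^{\phi}$ of $\H^d_\fm(R)=\varinjlim R/\langle x_1^n,\dots,x_d^n\rangle$, and this module is $E$ only when $R$ is quasi-Gorenstein.
\item Cyclic modules need not have a simple socle.
\item Since $\F^{\phi}$ is not exact, injectivity of the transition maps is lost, and with it any socle bound.
\end{itemize}

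The missing idea is to iterate and obtain a bound independent of $e$. Since $\F^{\phi^e}=(\F^{\phi})^e$, one has $\F^{\phi^e}(E)\cong E^{\nu^e}$, so it suffices to show that $\nu^e$ stays bounded. The paper does this, after completing, using a module fixed by $\F^{\phi}$. By Lemma~\ref{5.2a}, $\F^{\phi}(\H^d_\fm(R))\cong\H^d_\fm(R)$. Let $\omega_R=\D(\H^d_\fm(R))$ and let $U$ be the intersection of the top-dimensional primary components of $0$. There is an exact sequence $\H^d_\fm(R)^s\to E\to\D(U)\to 0$ with $s$ fixed. Applying $\F^{\phi^e}$ and dualizing gives $0\to\D(\F^{\phi^e}(\D(U)))\to R^{\nu^e}\to\omega_R^s$, whose kernel has dimension $<d$. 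Localizing at $\fp$ with $\dim R/\fp=d$ gives $\nu^e\,\ell(R_\fp)\le s\,\ell((\omega_R)_\fp)$ for all $e$, which forces $\nu=1$.

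Your duality route can be completed in the same spirit. Set $N_e={}^{\phi^e}R$. The isomorphism $\widetilde{\F}^{\phi^e}(R)\cong R^{\nu^e}$ in the right structure means $\Hom_R(N_e,R)\cong N_e^{\nu^e}$ in the ordinary structure. Localizing at a prime $\fq$ minimal in $\Supp N_e$ then yields $\nu^e\le\mu_0(\fq,R)\le\max_{\fq'\in\Ass R}\mu_0(\fq',R)$, a bound independent of $e$.
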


\begin{proof} By Matlis's theorem, there is a family $\{{\fp}_{\lambda}\}_{\lambda\in \Lambda}$ of prime
ideals of $R$ such that $$I\cong \underset{\lambda\in \Lambda}\oplus \E_R(R/{\fp_{\lambda}}).$$ Since
$\F^{\phi}(I)$ is injective, and the functor $\F^{\phi}$ commutes with direct sums, it follows that each
$\F^{\phi}(\E_R(R/{\fp_{\lambda}}))$ is also injective. Therefore, it suffices to show that if, for some
prime ideal $\fp$ of $R$, the $R$-module $\F^{\phi}(\E_R(R/{\fp}))$ is injective, then
$\F^{\phi}(\E_R(R/{\fp}))\cong \E_R(R/{\fp})$.

Let $\fp$ be a prime ideal of $R$ such that $\F^{\phi}(\E_R(R/{\fp}))$ is injective. Each element of
$\F^{\phi}(\E_R(R/{\fp}))$ is annihilated by some power of $\phi(\fp)R$, and so, by Lemma \ref{2.3}(vi),
is annihilated by some power of $\fp$. On the other hand, for each $x\in R\setminus \fp$, multiplication
by $\phi(x)$ induces an automorphism of $\F^{\phi}(\E_R(R/{\fp}))$. Hence, $\F^{\phi}(\E_R(R/{\fp}))$
naturally carries an $R_{\fp}$-module structure, which implies that $$\F^{\phi}(\E_R(R/{\fp}))\cong
\F^{\phi}(\E_R(R/{\fp}))\otimes_R {R_{\fp}}$$ as $R_{\fp}$-modules. Moreover, we have $$\E_R(R/{\fp})
\cong \E_R(R/{\fp})\otimes_RR_{\fp}\cong \E_{R_{\fp}}(R_{\fp}/{{\fp}R_{\fp}}).$$ Hence, by Lemma
\ref{2.3}(v), we deduce that $\F^{\phi}(\E_R(R/{\fp}))\cong
\F_{R_{\fp}}^{\phi_{\fp}}(\E_{R_{\fp}}(R_{\fp}/{{\fp}R_{\fp}}))$. Therefore, we may assume that
$(R,\fm,\Bbbk)$ is a local ring and that $\phi:R\to R$ is a locally contracting endomorphism such that
$R$ is $\F^{\phi}\!$-finite and $\F^{\phi}(\E_R(\Bbbk))$ is an injective $R$-module.  We aim to prove
that $\F^{\phi}(\E)\cong \E$, where $\E=\E_R(\Bbbk)$. By Lemma \ref{5.1ab}, $\F^{\phi}(\E)$ is a nonzero
Artinian $R$-module. Consequently, since $\F^{\phi}(\E)$ is injective, it must be isomorphic to $\E^{n}$
for some $n>0$. Thus, it remains to show that $n=1$. We may further assume that $R$ is complete and
$\F^{\phi}\!$-finite.

Let $d=\dim R$, and set $\omega_R\cong \D(\H_{\fm}^d(R))(=\Hom_R(\H_{\fm}^d(R),\E))$. Since $R$ is complete,
the $R$-module $\omega_R$ is finitely generated. Hence, there exists a surjection $R^s\to \omega_R$ for
some integer $s>0$, which induces the exact sequence  

\begin{equation}
\H_{\fm}^d(R)^s\to \H_{\fm}^d(\omega_R)\to 0.  \label{10}
\end{equation}

Let $U$ be the intersection of all primary components in a minimal primary decomposition of the zero ideal of $R$
whose corresponding quotient has dimension $d$.  By parts (d) and (f) of \cite[Proposition 2.3]{MA14}, there is an
exact sequence $$0\to R/U \to \D(\H_{\fm}^d(\omega_R)).$$ Dualizing gives a surjection $\H_{\fm}^d(\omega_R)\to K$,
where $K=\Hom_R(R/U,\E)$.  From this and \eqref{10}, we obtain the following exact sequence

\begin{equation}
\H_{\fm}^d(R)^s \to K \to 0.  \label{11}
\end{equation}

Applying the Matlis duality functor $\D$ to the exact sequence $0 \to U \to R \to R/U \to 0$ gives the exact sequence
$ 0 \to K \to \E \to \D(U) \to0.$  Combining this with \eqref{11}, we obtain the exact sequence $$\H_{\fm}^d(R)^s\to
\E \to \D(U)\to 0.$$ Lemma \ref{5.2a} implies that $\F^{\phi}(\H_{\fm}^d(R))\cong \H_{\fm}^d (R)$. Thus, applying the
functor $\F^{\phi^e}$ to the latter sequence and using the fact that $\F^{\phi}(\E)\cong \E^n$, we obtain the exact
sequence $$\H_{\fm}^d(R)^s \to \E^{n^e}\to \F^{\phi^e}(D_R(U)) \to 0.$$  Dualizing once more yields an exact sequence

\begin{equation}
0 \to \D(\F^{\phi^e} (\D(U))) \to R^{n^e} \to (\omega_R)^s. \label{12}
\end{equation}

Let $\fa=\Ann_R(U)$. As, by Lemma \ref{2.3}(xi), $R$ is $\F^{\phi^e}$-finite, we conclude that $ \dim R/\phi^e(\fa)
R\leq \dim R/\fa$. For any $R$-module $X$, it is clear that $\Ann_R(X)=\Ann_R(\D(X))$. So, $$\phi^e(\fa) R\subseteq
\Ann_R(\F^{\phi^e}(\D(U)))=\Ann_R(\D( \F^{\phi^e}(\D(U)))).$$ Hence,
$$
\begin{array}{rlllllllllll}
\dim_R(\D( \F^{\phi^e}(\D(U)))) &=\dim R/\Ann_R(\D( \F^{\phi^e}(\D(U))))\\
&\leq  \dim R/\phi^e(\fa) R\\
&\leq \dim R/\fa \\
& < d.
\end{array}
$$

Let $\fp$ be a prime ideal of R such that $\dim R/{\fp}=d$. Localizing \eqref{12} at $\fp$ gives the exactness of $0\to
R_{\fp}^{n^e} \to (\omega_{R_{\fp}})^s$. If $n > 1$, we obtain a contradiction by comparing lengths, since $e$ can
be arbitrarily large.
\end{proof}

\begin{corollary}\label{3.4ac} Let $(R,\fm,\Bbbk)$ be a local ring with a locally contracting endomorphism
$\phi: R\rightarrow R$. Assume that $R$ is $\F^{\phi}\!$-finite and $\F^{\phi}(\E_R(\Bbbk))\cong \E_R(\Bbbk)$.
Then $\F^{\phi}(I)\cong I$ for every injective $R$-module $I$.
\end{corollary}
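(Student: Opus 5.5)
The plan is to reduce to the two facts already established. Corollary \ref{3.4a} shows that $\F^{\phi}$ preserves injectivity in the complete case. Lemma \ref{3.4ab} upgrades ``$\F^{\phi}(I)$ injective'' to ``$\F^{\phi}(I)\cong I$'' for locally contracting $\phi$. Thus, given Lemma \ref{3.4ab}, it suffices to show that $\F^{\phi}(I)$ is injective for every injective $R$-module $I$.

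First, decompose $I\cong\bigoplus_{\lambda}\E_R(R/\fp_\lambda)$ by Matlis's theorem. Since $\F^{\phi}$ commutes with direct sums and $R$ is Noetherian (so direct sums of injectives are injective), it suffices to show that each $\F^{\phi}(\E_R(R/\fp))$ is injective.

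For this, I would repeat the localization argument from the proof of Lemma \ref{3.4ab}. Each element of $\F^{\phi}(\E_R(R/\fp))$ is killed by a power of $\fp$, and elements $\phi(x)$ with $x\notin\fp$ act invertibly. Hence, by Lemma \ref{2.3}(v),
\[\F^{\phi}(\E_R(R/\fp))\cong \F^{\phi_{\fp}}_{R_{\fp}}(\E_{R_{\fp}}(R_{\fp}/\fp R_{\fp})),\]
and an injective $R_{\fp}$-module is injective over $R$. The hypothesis, however, is only about $\E_R(\Bbbk)$ at the given maximal ideal, so I need to transfer $\F^{\phi}(\E_R(\Bbbk))\cong\E_R(\Bbbk)$ to every prime $\fp$.

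I would do the transfer via Lemma \ref{3.3a}: the hypothesis gives $\widetilde{\F}^{\phi}(R)=\Hom_R({}^{\phi}R,R)\cong R$. Since ${}^{\phi}R$ is finitely generated, Hom localizes:
\[\Hom_{R_{\fp}}({}^{\phi_{\fp}}R_{\fp},R_{\fp})\cong R_{\fp},\]
using ${}^{\phi}R\otimes_R R_{\fp}\cong{}^{\phi_\fp}R_\fp$, which holds as in the proof of Lemma \ref{5.1ab}. Next, $R_{\fp}$ is local, $\phi_{\fp}$ is locally contracting by Lemma \ref{2.3}(vii), and $R_{\fp}$ is $\F^{\phi_\fp}$-finite. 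So Lemma \ref{3.3a}, applied over $R_{\fp}$, yields $\F^{\phi_{\fp}}(\E_{R_\fp}(R_\fp/\fp R_\fp))\cong \E_{R_\fp}(R_\fp/\fp R_\fp)$. This module is injective, so $\F^{\phi}(\E_R(R/\fp))$ is injective. Alternatively, I could pass to completion and apply Corollary \ref{3.4a} directly.

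Finally, $\F^{\phi}(I)$ is injective, and Lemma \ref{3.4ab} gives $\F^{\phi}(I)\cong I$.

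The main obstacle is the localization of ${}^{\phi}R$. One needs ${}^{\phi}R\otimes_R R_\fp\cong {}^{\phi_\fp}R_\fp$ as $R_\fp$-modules on both sides, which amounts to checking that inverting $S=R\setminus\fp$ via $\phi$ and directly give the same ring. This requires that $\phi(s)$ for $s\in S$ and elements of $S$ generate the same multiplicative saturation. That in turn follows from $\phi^{-1}(\fq)=\fq$, i.e.\ from Lemma \ref{2.3}(vi): $\Rad(\phi(\fq)R)=\fq$.
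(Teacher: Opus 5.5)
Your argument is correct, but it takes a different route from the paper's. The paper's proof has two steps. First, the argument of Corollary \ref{3.4a} runs without completeness, since Lemma \ref{3.3a} is available for locally contracting $\phi$; this shows $\F^{\phi}(I)$ is injective. Then Lemma \ref{3.4ab} upgrades injectivity to $\F^{\phi}(I)\cong I$.

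You instead reduce to a single $\E_R(R/\fp)$ and transfer the hypothesis from $\fm$ to $\fp$. You do this through the equivalence in Lemma \ref{3.3a} together with the localization of $\widetilde{\F}^{\phi}(R)\cong R$, and then apply Lemma \ref{3.3a} over $R_{\fp}$. The identification $R_{\fp}\otimes_R{}^{\phi}R\cong{}^{\phi_{\fp}}R_{\fp}$, via $\phi^{-1}(\fq)=\fq$, is exactly the right point to check. It also shows that localizing $\Hom_R({}^{\phi}R,R)$ with respect to either of its two $R$-structures gives the same module. So it does not matter which structure the isomorphism $\widetilde{\F}^{\phi}(R)\cong R$ in Lemma \ref{3.3a} refers to.

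Your argument already yields $\F^{\phi}(\E_R(R/\fp))\cong \E_R(R/\fp)$ for every prime $\fp$. Hence $\F^{\phi}(I)\cong I$ follows at once from additivity of $\F^{\phi}$ and Matlis' decomposition. The detour through ``$\F^{\phi}(I)$ is injective'' and the final appeal to Lemma \ref{3.4ab} are superfluous.

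What your route buys is that it avoids Lemma \ref{3.4ab} altogether. That lemma's proof needs the canonical module, a primary decomposition of $0$, and a length comparison over the iterates $\phi^e$. Your argument is essentially the localization proof of Theorem \ref{3.6a}, with Lemma \ref{3.3a} in place of Lemma \ref{3.5b}(i). What the paper's route buys is the more general Lemma \ref{3.4ab}: there, injectivity of $\F^{\phi}(I)$ alone forces $\F^{\phi}(I)\cong I$, with no hypothesis on $\E_R(\Bbbk)$.

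One small correction: the fact that $\phi_{\fp}$ is locally contracting is not Lemma \ref{2.3}(vii). It is immediate from the definition, since for $\fq\subseteq\fp$ the localization of $\phi_{\fp}$ at $\fq R_{\fp}$ is $\phi_{\fq}$ on $R_{\fq}$.
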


\begin{proof} Let $I$ be an injective $R$-module. Applying the argument of the proof of Corollary \ref{3.4a}
shows that $\F^{\phi}(I)$ is also an injective $R$-module. By Lemma \ref{3.4ab}, it then follows that
$\F^{\phi}(I)\cong I$.
\end{proof}

The following result improves \cite[Lemma 1.4]{HS93}.
	
\begin{lemma}\label{3.5b} Let $(R,\fm,\Bbbk)$ be a quasi-Gorenstein local ring and $\phi: R\rightarrow R$
a ring endomorphism.
\begin{itemize}
\item[(i)] If the quotient ring $R/{\phi(\fm)R}$ is Artinian, then $F^{\phi}(\E_R(\Bbbk))\cong \E_R(\Bbbk)$.
\item[(ii)] If $R$ is complete and $\F^{\phi}\!$-finite, then $\F^{\phi}(I)$ is an injective $R$-module for every
injective $R$-module $I$.
\end{itemize}
\end{lemma}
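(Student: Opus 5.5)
For (i), we will use that quasi-Gorenstein means $\H^d_{\fm}(R)\cong \E_R(\Bbbk)$, where $d=\dim R$. This is the standard definition: $\widehat{R}$ has a canonical module isomorphic to $\widehat{R}$, equivalently $\H^d_{\fm}(R)\cong \E_R(\Bbbk)$.

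The plan is to show that $\F^{\phi}$ commutes with top local cohomology under the weaker hypothesis that $R/\phi(\fm)R$ is Artinian, i.e.\ that $\phi(\fm)R$ is $\fm$-primary. The proof of Lemma \ref{5.2a} applied to $N=R$ used the locally contracting or $\F^{\phi}$-finite hypothesis only to guarantee exactly this. So we rerun it. Choose a system of parameters $x_1,\dots,x_d$ of $R$ and set $\fb=\langle x_1,\dots,x_d\rangle$. Since $\H^d_{\fb}(-)$ is right exact, we get
$$\F^{\phi}(\H^d_{\fm}(R))\cong \H^d_{\fb}(\F^{\phi}(R))\cong \H^d_{\phi(\fb)R}(R).$$
Here $\F^{\phi}(R)\cong R$ by Lemma \ref{2.3}(i), and we use the Independence theorem. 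Alternatively, take the direct limit description $\varinjlim R/\langle x_1^n,\dots,x_d^n\rangle$ and tensor, as in Lemma \ref{3.5a}.

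Next we check that $\phi(\fb)R$ is $\fm$-primary. Since $\fb$ is $\fm$-primary, $\fm^k\subseteq\fb$ for some $k$, so $(\phi(\fm)R)^k\subseteq \phi(\fb)R$, and $\phi(\fm)R$ is $\fm$-primary by hypothesis. Note $\phi(\fm)R\ne R$ because $R/\phi(\fm)R$ is a nonzero Artinian ring; I take "Artinian" to include that it is a proper quotient, and this is implicit. Hence $\H^d_{\phi(\fb)R}(R)\cong\H^d_{\fm}(R)$. Combining, $\F^{\phi}(\E_R(\Bbbk))\cong\F^{\phi}(\H^d_{\fm}(R))\cong \H^d_{\fm}(R)\cong\E_R(\Bbbk)$. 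The only delicate point is the identification of the quasi-Gorenstein condition with $\H^d_{\fm}(R)\cong\E_R(\Bbbk)$; the rest is routine.

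For (ii), we reduce to Corollary \ref{3.4a}. Since $R$ is $\F^{\phi}$-finite, Lemma \ref{2.3}(x) gives that $R/\phi(\fm)R$ is Artinian. Then part (i) yields $\F^{\phi}(\E_R(\Bbbk))\cong\E_R(\Bbbk)$. As $R$ is complete and $\F^{\phi}$-finite, Corollary \ref{3.4a} applies directly and shows that $\F^{\phi}(I)$ is injective for every injective $R$-module $I$. No serious obstacle arises here; the substance is all in (i).
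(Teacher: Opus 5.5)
Your proposal is correct and takes essentially the paper's approach. The paper also uses $\H^d_{\fm}(R)\cong\E_R(\Bbbk)$ and reads off $\Rad(\phi(\fm)R)=\fm$ from the Artinian hypothesis; it then invokes Lemma \ref{3.5a} with $\fa=\fm$, $\ell=\ara(\fm)=d$ (the direct-limit argument you list as your alternative, of which your Lemma \ref{5.2a}-style route is a cosmetic variant), and it deduces (ii) from Lemma \ref{2.3}(x), part (i) and Corollary \ref{3.4a} exactly as you do.
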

	
\begin{proof} (i) Let $d=\dim R$. Then $\ara(\fm)=d$. Since $R$ is quasi-Gorenstein, we have $\E_R(\Bbbk)=\H_{\fm}^d(R)$.
On the other hand, as the quotient ring $R/{\phi(\fm)R}$ is Artinian, it follows that $\Rad(\phi(\fm)R)=\fm$. Thus, the
conclusion follows by Lemma \ref{3.5a}.

(ii) As $R$ is $\F^{\phi}\!$-finite, the quotient ring $R/{\phi(\fm)R}$ is Artinian by Lemma \ref{2.3}(x). Thus the claim
is immediate by (i) and Corollary \ref{3.4a}.
\end{proof}
	
We now present the main result of this section, extending \cite[Proposition 1.5]{HS93}. Here, a quasi-Gorenstein ring
is a ring $R$ such that, for every prime ideal $\frak p$ of $R$, the localization $R_{\frak p}$ is a local
quasi-Gorenstein ring; this definition is well defined by \cite[Corollary 2.4]{A}.

\begin{theorem}\label{3.6a} Let $R$ be a quasi-Gorenstein ring and $\phi:R \to R$ a locally contracting endomorphism.
Then $F^{\phi}(I)\cong I$ for every injective $R$-module $I$.
\end{theorem}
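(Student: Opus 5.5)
We reduce the statement to indecomposable injectives and then to the local case, where Lemma \ref{3.5b}(i) applies directly. By Matlis's theorem, write $I\cong\bigoplus_{\lambda}\E_R(R/\fp_\lambda)$. Since $\F^{\phi}$ commutes with direct sums, it suffices to prove $\F^{\phi}(\E_R(R/\fp))\cong \E_R(R/\fp)$ for a single prime ideal $\fp$ of $R$.

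Fix $\fp$ and set $\E=\E_R(R/\fp)$. Every element of $\E$ is killed by a power of $\fp$, so every element of $\F^{\phi}(\E)$ is killed by a power of $\phi(\fp)R$. By Lemma \ref{2.3}(vi), $\Rad(\phi(\fp)R)=\fp$, so every element of $\F^{\phi}(\E)$ is also killed by a power of $\fp$. Next let $x\in R\setminus\fp$. Multiplication by $x$ is an automorphism of $\E$, and by Lemma \ref{2.3}(ii) it induces multiplication by $\phi(x)$ on $\F^{\phi}(\E)$; hence $\phi(x)$ acts invertibly. To see that $x$ itself acts invertibly, note that $x\notin\fp=\Rad(\phi(\fp)R)$. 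Since $\phi_{\fp}$ is contracting, Lemma \ref{2.3}(ix) gives $\phi_\fp(\fp R_\fp)\subseteq \fp R_\fp$, so $\phi^{-1}(\fp)\supseteq\fp$. I also expect $\phi^{-1}(\fp)=\fp$, as used in the proof of Lemma \ref{5.3b}. Then $\phi(x)\notin\fp$, and an element outside $\fp$ acts invertibly on a module all of whose elements are $\fp$-power torsion only if it is invertible modulo the relevant annihilators. I would argue this as in the proof of Lemma \ref{3.4ab}, or more simply by localizing. Either way, $\F^{\phi}(\E)$ is an $R_\fp$-module, so
\[\F^{\phi}(\E)\cong \F^{\phi}(\E)\otimes_R R_\fp\cong \F^{\phi_\fp}_{R_\fp}(\E\otimes_R R_\fp)\cong \F^{\phi_\fp}_{R_\fp}\bigl(\E_{R_\fp}(R_\fp/\fp R_\fp)\bigr),\]
using Lemma \ref{2.3}(v) for the middle isomorphism. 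This reduction to the local situation is the step I expect to need the most care. The paper already carries it out inside the proof of Lemma \ref{3.4ab}, and I would cite that argument verbatim, noting that it does not use $\F^{\phi}$-finiteness.

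In the local situation, $R_\fp$ is quasi-Gorenstein by the definition of a quasi-Gorenstein ring, and $\phi_\fp$ is contracting. By Lemma \ref{2.3}(vi), $\Rad(\phi_\fp(\fp R_\fp)R_\fp)=\fp R_\fp$, so the quotient $R_\fp/\phi_\fp(\fp R_\fp)R_\fp$ is Artinian. Lemma \ref{3.5b}(i) then yields
\[\F^{\phi_\fp}\bigl(\E_{R_\fp}(R_\fp/\fp R_\fp)\bigr)\cong \E_{R_\fp}(R_\fp/\fp R_\fp).\]
Finally, $\E_{R_\fp}(R_\fp/\fp R_\fp)\cong \E_R(R/\fp)$ as $R$-modules. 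Combining this with the display above gives $\F^{\phi}(\E_R(R/\fp))\cong\E_R(R/\fp)$. Summing over the primes $\fp_\lambda$ proves $\F^{\phi}(I)\cong I$.

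No finiteness or completeness hypothesis is needed anywhere. The only inputs are Lemma \ref{3.5a} (through Lemma \ref{3.5b}(i)) and the radical property of Lemma \ref{2.3}(vi).
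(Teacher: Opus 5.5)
Your proposal matches the paper's proof step for step: the Matlis decomposition, the reduction $\F^{\phi}(\E_R(R/\fp))\cong \F^{\phi_\fp}_{R_\fp}(\E_{R_\fp}(R_\fp/\fp R_\fp))$ taken from the proof of Lemma~\ref{3.4ab} (which indeed needs no $\F^{\phi}$-finiteness), the Artinianness of $R_\fp/\phi_\fp(\fp R_\fp)R_\fp$ from Lemma~\ref{2.3}(vi), and then Lemma~\ref{3.5b}(i). On the step you flagged, your torsion remark does not make elements outside $\fp$ act invertibly, since $\fp$-power torsion alone cannot do that (think of $R/\fm$ with $\fm\supsetneq\fp$); the correct reason is that $\F^{\phi}(\E_R(R/\fp))$ is a module over $\phi(R\setminus\fp)^{-1}R$, whose primes are the $\fq$ with $\phi^{-1}(\fq)\subseteq\fp$, and these are exactly the primes $\fq\subseteq\fp$ because $\phi^{-1}(\fq)=\fq$ for every prime when $\phi$ is locally contracting, so this ring is $R_\fp$.
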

	
\begin{proof} Since, by Matlis' theorem, any injective module is a direct some of indecomposable injective modules and
the functor $\F^{\phi}$ commutes with direct sums, it suffices to show that $\F^{\phi}(\E_R(R/{\fp}))\cong \E_R(R/{\fp})$
for every prime ideal $\fp$ of $R$.
		
Let $\fp$ be a prime ideal of $R$. As we saw in the proof of Lemma \ref{3.4ab}, there is an $R_{\fp}$-isomorphism
$\F^{\phi}(\E_R(R/{\fp}))\cong \F_{R_{\fp}}^{\phi_{\fp}}(\E_{R_{\fp}}(R_{\fp}/{{\fp}R_{\fp}}))$. As the map $\phi_{\fp}:
R_{\fp}\to R_{\fp}$ is a locally contracting endomorphism, Lemma \ref{2.3}(vi)  yields that $\Rad(\phi_{\fp}
(\fp R_{\fp})R_{\fp})=\fp R_{\fp}$, and so the quotient ring $R_{\fp}/{\phi_{\fp}({\fp}R_{\fp}) R_{\fp}}$ is Artinian.
Since the local ring $R_{\fp}$ is quasi-Gorenstein, we have $$\F_{R_{\fp}}^{\phi_{\fp}}
(\E_{R_{\fp}}(R_{\fp}/{{\fp}R_{\fp}}))\cong
\E_{R_{\fp}}(R_{\fp}/{{\fp}R_{\fp}})$$ by Lemma \ref{3.5b}(i), and the result follows by the isomorphism $\E_{R_{\fp}}(R_{\fp}/{{\fp}R_{\fp}})\cong \E_R(R/{\fp})$.
\end{proof}
	
Below, we provide an alternative proof for the implication (i)$\Rightarrow$(iii) in Lemma \ref{2.4aa}. See also
\cite[Corollary 1.6]{HS93}.

\begin{corollary}\label{3.7a} Let $R$ be a regular ring with a locally contracting endomorphism $\phi:R\to R$
and $M$ an $R$-module. Then $\Ass_R(\F^{\phi}(M))=\Ass_R(M)$.
\end{corollary}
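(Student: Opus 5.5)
We plan to derive Corollary \ref{3.7a} from Theorem \ref{3.6a}, applied to a minimal injective resolution, together with flatness of ${}^{\phi}\!R$. A regular ring is Gorenstein, hence quasi-Gorenstein, so Theorem \ref{3.6a} gives $\F^{\phi}(\E_R(R/\fp))\cong \E_R(R/\fp)$ for every prime $\fp$. By Lemma \ref{2.4aa}, applied to $R$ regular and therefore reduced, the module ${}^{\phi}\!R$ is flat, so $\F^{\phi}$ is exact.

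Let $M$ be an $R$-module and take an injective hull $0\to M\to \E_R(M)$, with $\E_R(M)\cong\bigoplus_{\lambda}\E_R(R/\fp_\lambda)$. Since $M\subseteq \E_R(M)$ is essential, $\Ass_R(M)=\Ass_R(\E_R(M))=\{\fp_\lambda\}$. Applying the exact functor $\F^{\phi}$ gives an injection $\F^{\phi}(M)\hookrightarrow \F^{\phi}(\E_R(M))\cong \E_R(M)$, where the isomorphism comes from Theorem \ref{3.6a} and the fact that $\F^{\phi}$ commutes with direct sums. Hence $\Ass_R(\F^{\phi}(M))\subseteq \Ass_R(M)$.

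For the reverse inclusion, the inclusion of the previous paragraph is not enough. We need $\F^{\phi}(M)$ to be essential in the image, or we need a direct argument. The direct argument I would try first runs as follows. If $\fp\in\Ass_R(M)$, there is an injection $R/\fp\hookrightarrow M$. Flatness then gives $\F^{\phi}(R/\fp)\cong R/\phi(\fp)R\hookrightarrow \F^{\phi}(M)$ by Lemma \ref{2.3}(iv). So it suffices to show that $\fp\in\Ass_R(R/\phi(\fp)R)$. By Lemma \ref{2.3}(vi), $\Rad(\phi(\fp)R)=\fp$, so $\fp$ is the unique minimal prime of $\phi(\fp)R$ and hence is associated to $R/\phi(\fp)R$. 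This yields $\fp\in \Ass_R(\F^{\phi}(M))$.

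The step I expect to be the main obstacle is the first inclusion, namely checking that the isomorphism $\F^{\phi}(\E_R(M))\cong \E_R(M)$ legitimately controls associated primes. It is enough that $\Ass_R$ of a submodule is contained in $\Ass_R$ of the ambient module, which is standard, and that $\Ass_R(\bigoplus_\lambda \E_R(R/\fp_\lambda))=\{\fp_\lambda\}$. Both facts hold in a Noetherian ring, so no further hypothesis, such as $\F^{\phi}$-finiteness, should be needed.
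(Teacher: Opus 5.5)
Your proof is correct and is essentially the paper's own argument. Exactness of $\F^{\phi}$ (Lemma \ref{2.4aa}) together with Theorem \ref{3.6a} applied to $\E_R(M)$ gives $\Ass_R(\F^{\phi}(M))\subseteq\Ass_R(M)$, and the injection $R/\phi(\fp)R\hookrightarrow\F^{\phi}(M)$ combined with $\Rad(\phi(\fp)R)=\fp$ gives the reverse inclusion. You also make explicit that a regular ring is reduced and quasi-Gorenstein, which is needed to invoke Lemma \ref{2.4aa} and Theorem \ref{3.6a} and which the paper leaves implicit. Your opening mention of a minimal injective resolution is superfluous, since only the injective hull is used.
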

	
\begin{proof} Let $\E_R(M)$ denote the injective envelope of $M$. By Lemma \ref{2.4aa}, the functor $\F^{\phi}$
is exact. Applying $\F^{\phi}$ to the natural exact sequence $0\to M\to \E_R(M)$, we obtain the exact sequence
$0\to \F^{\phi}(M)\to \F^{\phi}(\E_R(M)).$ By Theorem \ref{3.6a}, $\F^{\phi}(\E_R(M))\cong \E_R(M)$, which yields
an injective map $\F^{\phi}(M)\to \E_R(M)$. Consequently, we conclude that $$\Ass_R(\F^{\phi}(M))\subseteq
\Ass_R(\E_R(M))=\Ass_R(M).$$

Conversely, let $\fp \in \Ass_R(M)$. Then there is an exact sequence $0 \to R/{\fp}\to M$, and applying the
exact functor $\F^{\phi}$ to this sequence yields the exact sequence $0\to R/{\phi(\fp)R}\to \F^{\phi}(M)$.
Since, by Lemma \ref{2.3}(vi), $\Rad(\phi(\fp)R)=\fp$, it follows that $\fp \in \Ass_R(R/{\phi(\fp)R})$, and
hence $\fp\in \Ass_R(\F^{\phi}(M))$.
\end{proof}
	
Next, we establish a dual result to Corollary \ref{3.7a} for coassociated prime ideals.

\begin{corollary}\label{3.8a} Let $(R,\fm,\Bbbk)$ be a regular local ring with a locally contracting endomorphism
$\phi:R\to R$ and $M$ an $R$-module. Assume that $R$ is $\F^{\phi}\!$-finite. Then $\Coass_R(\widetilde{\F}^{\phi}(M))
=\Coass_R(M)$.
\end{corollary}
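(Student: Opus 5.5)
We will dualize the argument of Corollary \ref{3.7a}, replacing injective envelopes and associated primes by flat covers and coassociated primes. A cleaner route, however, is to combine Lemma \ref{5.1aa}(i)-style formulas with Matlis duality. We will use the identity (from \cite[Theorem 1.21]{Y} and its companions)
$$\Coass_R(\Hom_R(N,M)) \ \text{versus}\ \Coass_R(M)\cap\Supp_R(N)$$
for finitely generated $N$. The needed inclusion is $\Coass_R(\Hom_R(N,M))\subseteq \Coass_R(M)\cap \Supp_R(N)$. Equality in general is false, so a direct formula is not enough. Instead I would argue both inclusions separately.

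\emph{Step 1: $\Coass_R(\widetilde{\F}^{\phi}(M))\subseteq \Coass_R(M)$.} Since $R$ is regular, Lemma \ref{2.4aa} gives that ${}^{\phi}R$ is flat. Being finitely generated over a local ring, it is therefore free, say ${}^{\phi}R\cong R^{n}$ as left $R$-modules. Hence $\widetilde{\F}^{\phi}(M)\cong M^{n}$ as left $R$-modules. For finite direct sums, $\Coass_R(M^n)=\Coass_R(M)$, since an Artinian quotient of $M^n$ is a quotient of a finite sum of Artinian quotients of $M$, and conversely a quotient of one summand is a quotient of the sum. This gives equality at once, provided $n\ge 1$. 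Here $n\geq1$ holds because ${}^\phi R\neq 0$.

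\emph{Step 2: checking the module structure.} The statement concerns $\widetilde{\F}^{\phi}(M)$ with its natural left $R$-structure, as in Lemma \ref{5.1aa}, namely $r\cdot\theta=\theta(\phi(r)\,\cdot)=r\theta(\cdot)$. Under $\Hom_R(R^n,M)\cong M^n$ this corresponds to the diagonal action, so the identification in Step 1 respects the structure relative to which coassociated primes are computed. If instead the right structure were intended, I would fall back on dualizing. In that case I would reduce to the complete case as in Lemma \ref{5.1a}, using Corollary \ref{3.2a} to relate $\D(\widetilde{\F}^{\phi}(M))$ with $\F^{\phi}(\D(M))$. I would then apply Corollary \ref{3.7a} together with the standard duality $\Coass_R(X)=\Ass_R(\D(X))$ for Artinian quotients. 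This fallback is where I expect the main obstacle, because $\Coass$ versus $\Ass$ of Matlis duals holds only under finiteness hypotheses.

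So the main plan is short: flatness plus $\F^{\phi}$-finiteness over a local ring gives freeness of ${}^\phi R$, which reduces the claim to $\Coass_R(M^n)=\Coass_R(M)$. The only care needed is in pinning down which $R$-structure is used, and I expect that to cause no real difficulty.
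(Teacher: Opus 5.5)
\textbf{The gap is the module structure.} Your argument proves the corollary for the wrong $R$-module structure on $\widetilde{\F}^{\phi}(M)$.

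In this paper $\widetilde{\F}^{\phi}(M)$ carries the structure fixed in Section 2, $(\theta.r)(s)=\theta(sr)$, so $R$ acts through the ring ${}^{\phi}\!R$ itself. Lemma \ref{5.1aa} is the one place where the other (left) structure is used, and it says so explicitly. Corollary \ref{3.8a} is meant for the default structure. That is the structure for which Corollary \ref{3.2a}(i) holds, and Corollary \ref{3.7a} is likewise about the right structure on $\F^{\phi}(-)$: its proof rests on Lemma \ref{2.3}(iv) and Theorem \ref{3.6a}.

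For the left structure the corollary is the triviality you establish. Your argument never uses that $\phi$ is locally contracting, which signals that this is not the intended reading.

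For the right structure your Step 1 fails. The isomorphism ${}^{\phi}\!R\cong R^n$ only gives an isomorphism of abelian groups $\widetilde{\F}^{\phi}(M)\cong M^n$. In it, $r$ acts through the matrix of multiplication by $r$ on ${}^{\phi}\!R\cong R^n$, not diagonally, so $\Coass_R(M^n)=\Coass_R(M)$ does not apply.

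Here is a concrete instance, assuming $\dim R>0$. Every $\theta\in\widetilde{\F}^{\phi}(\Bbbk)$ vanishes on $\fm\cdot{}^{\phi}\!R=\phi(\fm)R$. Hence $\widetilde{\F}^{\phi}(\Bbbk)$ is the $\Bbbk$-dual of $R/\phi(\fm)R$, with $r$ acting by precomposition with multiplication by $r$. Its annihilator is $\phi(\fm)R$, which is strictly smaller than $\fm$. Indeed, $\phi(\fm)R=\fm$ would force $\phi^i(\fm)R=\fm$ for all $i$, and then $\fm\subseteq\fm^2$, contradicting Nakayama. So $\widetilde{\F}^{\phi}(\Bbbk)\ncong\Bbbk^n$ as $R$-modules.

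\textbf{How to close it.} The missing argument is exactly your fallback, which is the paper's proof, and the obstacle you anticipate is not there.

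Over a local ring, $\Coass_R(X)=\Ass_R(\D(X))$ holds for every $R$-module $X$, with no finiteness hypothesis. For $f\in\D(X)$ one has $\Ann_R(f)=\Ann_R(f(X))$, and $f(X)\subseteq\E_R(\Bbbk)$ is an Artinian quotient of $X$. Conversely, an Artinian quotient $L$ with $\Ann_R(L)=\fp$ embeds in some $\E_R(\Bbbk)^t$. Since $\fp$ is prime, some coordinate projection $\pi_j(L)$ already has annihilator $\fp$.

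No passage to the completion is needed either. Corollary \ref{3.2a}(i) holds for any $\F^{\phi}$-finite local ring, and Corollary \ref{3.7a} needs only that $R$ is regular and $\phi$ locally contracting. Thus, with all structures the right ones,
\[
\Coass_R(\widetilde{\F}^{\phi}(M))=\Ass_R(\D(\widetilde{\F}^{\phi}(M)))=\Ass_R(\F^{\phi}(\D(M)))=\Ass_R(\D(M))=\Coass_R(M).
\]
The real content lies in Corollary \ref{3.7a}, and through it in Theorem \ref{3.6a}, not in the freeness of ${}^{\phi}\!R$.
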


\begin{proof} Recall that  $\D(-)=\Hom_R(-,\E_R(R/\fm))$ denote the Matlis duality functor. By \cite[Theorem 1.7]{Y},
for any $R$-module $X$, we have $\Coass_R(X)=\Ass_R(\D(X))$. On the other hand, Corollary \ref{3.2a}(i) implies that
$D(\widetilde{\F}^{\phi}(M))\cong \F^{\phi}(\D(M))$. Thus, by Corollary \ref{3.7a}, we deduce that
\[\begin{array}{rlllllllllll}
\Coass_R(\widetilde{\F}^{\phi}(M)) &=\Ass_R(\D(\widetilde{\F}^{\phi}(M)))\\
&= \Ass_R(\F^{\phi}(\D(M)))\\
&= \Ass_R(\D(M))\\
&= \Coass_R(M).
\end{array}\]
\end{proof}


\end{document}